\numberwithin{equation}{section}
\newtheorem{thm}{Theorem}[section]
\newtheorem{lemma}[thm]{Lemma}
\newtheorem{prop}[thm]{Proposition}
\newtheorem{cor}[thm]{Corollary}
\theoremstyle{definition}
\newtheorem{definition}[thm]{Definition}
\theoremstyle{remark}
\newtheorem{remark}[thm]{Remark}
\newtheorem{remarks}[thm]{Remarks}
\newtheorem{questions}[thm]{Questions}
\def\go{G^{(0)}}
\def\leqa{\leq_{\text{ring}}}
\def\lessa{<_{\text{ring}}}
\def\sima{\sim_{\text{ring}}}
\def\cba{\precsim_{\text{ring}}}
\def\cbc{\precsim_{C^{\ast}}}
\def\cs{C^{\ast}}
\DeclareMathOperator{\KP}{KP}
\newcommand{\N}{\mathbb{N}}
\newcommand{\inv}{^{-1}}
\newcommand{\C}{\mathbb{C}}
\title[Purely infinite and simple Steinberg algebras groupoid
$\cs$-algebras]{Dense subalgebras of purely infinite simple groupoid $\cs$-algebras}
\author[J.H. Brown]{Jonathan H. Brown}
\address[J.H. Brown]{
Department of Mathematics\\
University of Dayton\\
300 College Park Dayton\\
OH 45469-2316 USA} \email{jonathan.henry.brown@gmail.com}
\author[L.O. Clark]{Lisa Orloff Clark}
\author[A. an Huef]{Astrid an Huef}
\address[L.O. Clark and A. an Huef]{School of Mathematics and
Statistics, Victoria University of Wellington, P.O. Box 600, Wellington 6140, New Zealand}
\email{lisa.clark@vuw.ac.nz}
\email{astrid.anhuef@vuw.ac.nz }
\subjclass[2010]{46L05, 16S60}
\keywords{Purely infinite ring, purely infinite $C^*$-algebra, ample groupoid, Steinberg algebra, Kumjian--Pask algebra,   infinite projection, infinite idempotent.}
\thanks{Part of this research was done while the first named author was visiting the other two authors: he would like to thank them for their hospitality during this visit.  This research was supported by the Marsden grant   from the Royal Society of New Zealand and by the Seed grant IGRS03  from the University of Dayton Research Institute.  We thank an anonymous referee for helpful suggestions, in particular for showing us Lemma~\ref{lem:referee}. }
\date{February 2020}
\begin{document}

\begin{abstract}	
A simple Steinberg algebra associated to an ample Hausdorff groupoid $G$ is 
algebraically purely infinite  if and only if the characteristic functions of compact open 
subsets of the unit space are infinite idempotents.  If a simple Steinberg algebra  is algebraically purely infinite, then the reduced groupoid $C^*$-algebra $C^*_r(G)$ is simple  and purely infinite. But the Steinberg algebra seems too small for the converse to hold.  For this purpose we introduce an intermediate $*$-algebra $B(G)$ constructed using corners $1_U C^*_r(G) 1_U$ for all compact open subsets $U$ of the unit space of the groupoid.  We then show that if $G$ is minimal and effective, then $B(G)$ is algebraically properly infinite  if and only if $C^*_r(G)$ is purely infinite simple. We apply our results to the algebras of higher-rank graphs.
\end{abstract}
 \maketitle

\section{Introduction}
Inspired by the success of classifying purely infinite simple $C^*$-algebras, Ara, Goodearl and Pardo  
extended the notion of  purely infinite $C^*$-algebras to unital rings, and studied their  basic properties \cite{AGP}.    
Purely infinite rings were   studied further in \cite{Ara, AA, GP, AGPS}.  
In particular,  \cite{AGPS} develops a general theory of purely infinite rings by presenting an 
algebraic parallel of the work in \cite{KR} on purely infinite $\cs$-algebras. Here we investigate a question asked in \cite[Problem~8.4]{AGPS}:
\begin{itemize}
\item[]   Suppose that $A_0$ is a dense subalgebra of a $C^*$-algebra $A$. If $A_0$ is purely infinite in the algebraic sense, is  $A$ purely infinite in the $C^*$-algebraic sense?
\end{itemize}
We focus on the situation where the $\cs$-algebra is a simple $\cs$-algebra associated to an ample Hausdorff groupoid, that is, an \'etale groupoid with a basis of compact open sets.   Since every non-unital, purely infinite, simple, nuclear $\cs$-algebra in the  UCT class is isomorphic to the reduced $\cs$-algebra $C^*_r(G)$ of some ample Hausdorff groupoid $G$ (see \cite[p. 367]{Sp}), 
this is a good setting to investigate this problem and its converse.

We investigate two different dense subalgebras of $\cs_r(G)$.  The first is the complex Steinberg algebra $A(G)$ associated to an ample groupoid $G$.   Simplicity of $A(G)$ and $C^*_r(G)$ are well understood (provided $G$ is amenable and second countable): $A(G)$ is algebraically simple if and only if $C^*_r(G)$ is $C^*$-algebraically simple, and this happens if and only if $G$ is minimal and effective \cite{BCFS}.  
For the remainder of the introduction,  we assume that $G$ is a minimal and effective ample groupoid.  

It was shown in \cite[Theorem~4.1]{BCS} that $C^*_r(G)$ is purely infinite if and only if, for every compact open set $U$ of the unit space, the characteristic function $1_U$ is an infinite projection.  As a direct consequences, we obtain an affirmative answer to \cite[Problem~8.4]{AGPS} when $A_0 =A(G)$ (see Corollary~\ref{cor:main}).  The converse remains an open question because, in a way, $A(G)$ is too small.

The second dense subalgebra $B(G)$ of $C^*_r(G)$ that we investigate contains the corners  $1_U C^*_r(G) 1_U$ for all compact open subsets $U$ of the unit space of the groupoid, and so  is much larger than $A(G)$.  
The reason $B(G)$ is useful is two-fold: First, unlike $C^*_r(G)$, $B(G)$ is s-unital, and so we can apply the results of \cite{AGPS} directly to it.  Second, unlike $A(G)$, $B(G)$ has the property that $B(G)xB(G)$ is contained in $B(G)$ for every $x\in C^*_r(G)$.   Using these two properties, we show in Theorem~\ref{thm:main} that $B(G)$ is algebraically properly infinite if and only if $\cs(G)$ is purely infinite.

Our motivation for studying these dense subalgebras is to make progress on the long-standing open question of finding necessary and sufficient conditions on  a groupoid $G$ that characterise when the $C^*_r(G)$ is purely infinite.   
Anantharaman-Delaroche introduced the notion of a locally contracting groupoid in~\cite{AD}, and then showed the reduced $C^*$-algebra of a locally contracting groupoid is purely infinite  \cite[Proposition~2.4]{AD}.  It is not known if the converse holds, even for the $C^*$-algebras of higher-rank graphs which can be modelled using ample groupoids.  

It follows from \cite[Proposition~2.2]{AD} that in an ample, locally contracting  groupoid,  the $1_U$ are all infinite projections, but again it is not known if the converse holds. For the groupoids of higher-rank graphs, it would suffice 
to understand what graph properties ensure that all vertex projections are
infinite projections in the graph $C^*$-algebra \cite[Corollary~5.1]{BCS}.

Following \cite{CaHS}, there has been substantial progress made in investigating a possible dichotomy for the $C^*$-algebras of ample groupoids  \cite{BL, RS}: that $C^*$-algebras of ample groupoids  are all either purely infinite or stably finite. At this stage, it has been established that the dichotomy holds when a certain type semigroup is almost unperforated.   Also, Corollary~4.9 of \cite{BL} gives a sufficient condition for an inner exact groupoid to have a purely infinite $C^*$-algebra. Proposition~7.1 of \cite{RS}  gives a necessary condition on $G$ for the $C^*$-algebra to be purely infinite.  How either of these conditions relate to  a locally contracting groupoid is not immediately clear.

The outline of this paper is as follows.  In Section~\ref{sec-prelim} we give some preliminaries, 
including algebraic and $C^*$-algebraic definitions of infinite idempotents/projections, and purely infinite rings and $C^*$-algebras.
The main focus of Section~\ref{section:1U} is to 
prove an algebraic version of \cite[Theorem~4.1]{BCS} for Steinberg algebras: 
$A(G)$ is algebraically purely infinite if and only if, for every compact open 
set $U$ of the unit space, $1_U$ is an infinite idempotent  in  $A(G)$.  
In Section~\ref{section:main}, we introduce $B(G)$.  Our main theorem, Theorem~\ref{thm:main}, says that $B(G)$ is algebraically properly infinite if and only if $\cs_r(G)$ is $\cs$-purely infinite.  The forward implication is very general in that it does not require $G$ to be minimal or effective.

Finally, in Section~\ref{section:KP}, we show how our results apply
to higher-rank graphs and their algebras. The Steinberg algebra of the graph groupoid is canonically isomorphic to the  Kumjian--Pask algebra of the graph \cite{ACaHR,CP}, and the $C^*$-algebra of the graph groupoid is isomorphic to the $C^*$-algebra of the graph.  We reconcile our  results with recent results by Larki about purely infinite Kumjian--Pask algebras \cite{Lar}.
Finally, in Corollary~\ref{cor:KP}, we consider the special case of an aperiodic and cofinal higher-rank graph which
has a return path with an entrance, and show these graphs give rise to purely infinite algebras.
Along the way, we prove Lemma~\ref{lem:ret} which allows us to construct return paths with entrances. This 
fills a gap in \cite[Corollary~5.7]{EvansSims:JFA2012} and \cite[Proposition~8.8]{Sims}.

\section{Preliminaries}\label{sec-prelim}

We start by summarising the algebraic and $\cs$-algebraic definitions that we use from \cite{AGPS} and \cite{KR}, respectively.

\subsection*{Algebraic preliminaries}
 \begin{definition}\label{defn ring} Suppose that $p$ and $q$ are idempotents in a ring $R$, $a \in R$, 
$x \in M_k(R)$ and $y \in M_n(R)$.  Set $RaR:=\text{span}\{cad:c,d\in R\}$.
\begin{enumerate}
\item  $R$ is {\em s-unital}, if for each $r \in R$, there exist $u,v \in R$ such that $ur=rv=r$;
\item  $p \leqa q$, if $pq=qp=p$;
\item $p \lessa q$ if $p \leqa q$ and $p \neq q$;
\item  $p \sima q$ if there exist $s, t \in R$ such that $st=p$ and $ts =q$;
\item  $x \cba y$ if there exists $\alpha \in M_{kn}(R)$ and $\beta \in M_{nk}(R)$ such that  $x=\alpha y \beta$;
\item  $p$ is an {\em infinite  idempotent}  if there exists an idempotent $q$ such that $p \sima q \lessa p$;
\item  $p$ is a {\em  algebraically properly infinite idempotent} if  there exists an idempotent $q$ such that $p \oplus p \sima q \leqa p$;
\item  $a$ is {\em algebraically infinite} 	if  there exists  $t \in R\setminus\{0\}$ such that $a \oplus t \cba a$;
\item  $a$ is {\em algebraically properly infinite} if  $a\neq 0$ and $a \oplus a \cba a$;
\item $R$ is {\em algebraically  purely infinite simple} 	if  $R$ is algebraically simple ($R$ has no nontrivial two-sided ideals) and  every nonzero right ideal  contains an infinite idempotent;
\item  $R$ is  {\em algebraically  properly infinite}	(also called \emph{properly purely infinite}) if  every nonzero element of $R$ is algebraically properly infinite;
\item  $R$ is {\em algebraically 
purely infinite} 		if  
\begin{enumerate}
\item no quotient of $R$ is a division ring; and 
\item \label{b} whenever $a \in R$ and $b \in RaR$, we have $b \cba a$.
\end{enumerate}
\end{enumerate}
\end{definition}

\begin{remarks} Here we  clarify some of the relationships between the definitions given above.
\label{rmk resolve ring}
\begin{enumerate}
\item\label{it1:rmkresolve} If $p$ and $q$ are idempotents with $p\leqa q$, then $p\cba q$.  
\item By \cite[Remarks~2.7]{AGPS} an idempotent is an infinite idempotent if and only if it is algebraically infinite and it is an algebraically properly infinite idempotent if and only if it is algebraically properly infinite.
\item Let $R$ be a ring with local units which means that for every finite set $F$ there exists an idempotent $e$ such that $F\subset eRe$. (The existence of local units implies that  $R$ is s-unital.) Then $R$ is algebraically purely infinite simple  if and only if (i) $R$ is not a division ring and (ii) for all $a,b\in R$ with $a\neq 0$ we have $b \cba a$ \cite[Proposition~10]{AA}; in particular,  $R$ is simple and  algebraically purely infinite if and only if  $R$ is algebraically purely infinite simple.
\end{enumerate}
\end{remarks}

The next lemma is an algebraic version, for s-unital rings, of \cite[Lemma~3.17]{KR}.

\begin{lemma}
\label{lem:3.9}
Suppose that $R$ is an s-unital ring.   Let $p, q, a\in R$, and suppose that  $q$ is an idempotent, and that $p$ is an infinite idempotent.
\begin{enumerate}
\item\label{it1:lem3.9}  If $p \cba a$, then $a$ is algebraically infinite.
\item\label{it2:lem3.9}  If $p \leqa q$, then $q$ is an infinite idempotent.
\item\label{it3:lem3.9}  If $p \sima q$, then $q$ is an infinite idempotent.
\end{enumerate}
\end{lemma}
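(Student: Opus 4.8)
The plan is to prove part~\eqref{it1:lem3.9} first, by exhibiting explicit rectangular matrices, and then to deduce parts~\eqref{it2:lem3.9} and~\eqref{it3:lem3.9} from it. Those deductions are immediate: if $p\leqa q$ then $p=pqp$, so $p\cba q$ by Remarks~\ref{rmk resolve ring}\eqref{it1:rmkresolve}; and if $st=p$, $ts=q$, then $p=p^2=(st)(st)=s(ts)t=sqt$, so again $p\cba q$. In either case part~\eqref{it1:lem3.9}, applied with $a$ replaced by $q$, shows $q$ is algebraically infinite, and then the second item of Remarks~\ref{rmk resolve ring} (recording \cite[Remarks~2.7]{AGPS}), together with $q$ being an idempotent, yields that $q$ is an infinite idempotent. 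So the content is entirely in part~\eqref{it1:lem3.9}.

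To prove part~\eqref{it1:lem3.9}, write $p=\alpha a\beta$ and, after replacing $\alpha,\beta$ by $p\alpha,\beta p$, assume $\alpha=p\alpha$ and $\beta=\beta p$. Since $p$ is infinite, fix an idempotent $e$ with $p\sima e\lessa p$; pick $s,t$ with $st=p$, $ts=e$, and, after replacing $s,t$ by $pse$ and $etp$, assume $ps=sp=se=s$ and $pt=et=tp=t$. Put $c:=p-e$, a nonzero idempotent with $cp=pc=c$, $ce=ec=0$, hence also $sc=ct=0$. Decomposing $\alpha=p\alpha=(e+c)\alpha$ and $\beta=\beta(e+c)$ gives the block relations $(e\alpha)a(\beta e)=e$, $(c\alpha)a(\beta c)=c$, and $(e\alpha)a(\beta c)=(c\alpha)a(\beta e)=0$, and from these one checks that $f:=a\beta\alpha$ is an idempotent equal to the orthogonal sum of the idempotents $f_e:=a\beta e\alpha$ and $f_c:=a\beta c\alpha$, with $f_e,f_c\leqa f$, $f_c\ne0$ (it is equivalent to the nonzero idempotent $c$), and $f\sima f_e$ (compose $f\sima p\sima e\sima f_e$); symmetrically $f'':=\beta\alpha a$ is an idempotent equal to $f''_e+f''_c$ with $f''_e:=\beta e\alpha a$, $f''_c:=\beta c\alpha a$, $f''_c\ne0$, $f''\sima f''_e$. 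So $f$ is an infinite idempotent ``sitting inside $a$'': $f\in aR$ and $f''\in Ra$.

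The heart of the matter is to prove $a\oplus c\cba a$, which (as $c\ne0$) is precisely the statement that $a$ is algebraically infinite. Tracing through the equivalences above, the elements $\sigma:=a\beta s\alpha$ and $\tau'':=\beta t\alpha a$ satisfy $f\sigma=\sigma f_e=\sigma$, $\sigma f_c=0$, $f''_e\tau''=\tau''f''=\tau''$, $f''_c\tau''=0$, and the key identity $\sigma a\tau''=a\beta\alpha a=fa=af''$. Now use that $R$ is s-unital to choose a $u\in R$ with $uy=y$, and a $v\in R$ with $yv=y$, for all $y$ in the (finite) set of elements that appear, and set
\[
\gamma_1:=u-f+\sigma,\qquad \delta_1:=v-f''+\tau'',\qquad \gamma_2:=c\alpha,\qquad \delta_2:=\beta c .
\]
Then the $2\times 1$ matrix with entries $\gamma_1,\gamma_2$, times $a$, times the $1\times 2$ matrix with entries $\delta_1,\delta_2$, is exactly $a\oplus c$: its $(1,1)$-entry is $\gamma_1 a\delta_1=a-fa+\sigma a\tau''=a$ (cancellation using $fa=af''$ and $\sigma a\tau''=fa$), its $(2,2)$-entry is $\gamma_2 a\delta_2=(c\alpha)a(\beta c)=c$, its $(1,2)$-entry vanishes because $\gamma_1 f_c=\sigma f_c=0$ while $a\beta c=f_c(a\beta c)$, and its $(2,1)$-entry vanishes because $f''_c\delta_1=f''_c\tau''=0$ while $(c\alpha)a=(c\alpha)af''_c$. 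Hence $a\oplus c\cba a$, so $a$ is algebraically infinite.

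I expect this last construction to be the main obstacle. The relation $\cba$ demands the matrix product be \emph{exactly} $a\oplus c$, so both off-diagonal entries must be identically zero and the top-left entry exactly $a$; conceptually one is replacing the copy of $p$ sitting inside $a$ by the equivalent proper subidempotent $e$, thereby freeing a copy of $c=p-e$. The bookkeeping is delicate — in particular, since $R$ need not be unital one must everywhere substitute the local units $u,v$ for $1$ and control how the correction terms $\sigma,\tau''$ interact with $a$ (this is exactly the point of the identity $\sigma a\tau''=fa$). Everything else is routine computation with the normalised idempotents above.
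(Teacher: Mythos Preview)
Your proof is correct. For parts~\eqref{it2:lem3.9} and~\eqref{it3:lem3.9} your reductions to part~\eqref{it1:lem3.9} via $p\cba q$ are exactly what the paper does (the paper also leaves the passage from ``algebraically infinite idempotent'' back to ``infinite idempotent'' to Remarks~\ref{rmk resolve ring}, just as you do).

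The only genuine difference is in part~\eqref{it1:lem3.9}: the paper does not prove it at all but simply cites \cite[Lemma~3.9(iii)]{AGPS}, whereas you give a complete self-contained argument, producing explicit $2\times 1$ and $1\times 2$ matrices witnessing $a\oplus c\cba a$ with $c=p-e\neq 0$. Your bookkeeping checks out: the block relations $(e\alpha)a(\beta e)=e$, $(c\alpha)a(\beta c)=c$, and the vanishing cross terms all follow from $e p e=e$, $cpc=c$, $epc=cpe=0$; the key identity $\sigma a\tau''=a\beta(spt)\alpha a=a\beta p\alpha a=fa=af''$ is correct; and the off-diagonal entries vanish because $\gamma_1 f_c=f_c-f_c+0=0$ and $f''_c\delta_1=f''_c-f''_c+0=0$. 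One small observation: since every auxiliary element on the left ($f$, $f_c$, $\sigma$) has the form $a\cdot(\text{something})$ and every one on the right ($f''$, $f''_c$, $\tau''$) has the form $(\text{something})\cdot a$, you actually only need $ua=a$ and $av=a$ for the single element $a$; the stronger ``common local unit for a finite set'' is not required. What your direct construction buys is independence from \cite{AGPS}; what the paper's citation buys is brevity.
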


\begin{proof}\eqref{it1:lem3.9} is precisely \cite[Lemma~3.9(iii)]{AGPS}.
For \eqref{it2:lem3.9}, suppose that $p \leqa q$. By Remark~\ref{rmk resolve ring}\eqref{it1:rmkresolve}, $p\cba q$.   Since $p$ is algebraically infinite,  \eqref{it1:lem3.9} gives that $q$ is also algebraically infinite.

For \eqref{it3:lem3.9}, suppose  $p \sima q$.
Then there exist $r,s \in R$ such that $rs=p$ and $sr =q$.
Let $\alpha = r$ and $\beta = s$.  
Then
\[\alpha q \beta = rqs=r(sr)s = (rs)^2 = p^2 = p.\]
Now $p\leqa q$ and $p$ is algebraically infinite, and so $q$ is algebraically infinite by  \eqref{it1:lem3.9}.  
\end{proof}


\subsection*{$C^*$-algebraic preliminaries}  Let $A$ be a $\cs$-algebra. We write $A^+$ for the positive elements of $A$, and then $A^+=\{a^*a: a\in A\}$. We write $C_0(X)$ for the $C^*$-algebra of continuous functions from $X$ to $\mathbb{C}$ that vanish at infinity. The following definitions come from \cite{KR}.

\begin{definition}\label{C* def} Let $A$ be a $C^*$-algebra, let $p$ and $q$ be projections in $A$, and let $a, b \in A^{+}$, $x \in M_k(A)$ and $y \in M_n(A)$.
\begin{enumerate}
\item  
$a \leq_{\cs} b$ 	if $b-a\in A^{+}$;
\item  
$a <_{\cs} b$ if $a \leq_{\cs} b$ and $a \neq b$;
\item 
$p \sim_{\cs} q $ 	if  there exists $t \in A$ such that $tt^* = p$  and $t^*t = q$;
\item 
$x \cbc y$  if there exists a sequence $\{\alpha_i\} \subseteq M_{k,n}(A)$  such that $\alpha_iy\alpha_i^* \to x$ in norm;
\item 
$p$ is an {\em infinite  projection 	}	if  there exists a projection $q$ such that $p \sim_{\cs} q <_{\cs} p$;
\item $p$ is a {\em $\cs$-properly infinite projection} if there exist mutually orthogonal projections $q_1$ and $q_2$ such that $q_1 \leq p, q_2\leq p$ and $q_1 \sim_{\cs} p \sim_{\cs} q_2$;
\item 
$a$ is {\em $\cs$-infinite} 	if there exists a nonzero  $t \in A^{+}$ such that $a \oplus t \cbc a$;
\item 
$a$ is {\em $\cs$-properly
infinite}	if  $a\neq 0$ and $a \oplus a \cbc a$;

\item A $C^*$-subalgebra $B\subseteq A$ is {\em hereditary} if $a\leq_{\cs} b$ and $b\in B$ implies $a\in B$. 
\item  
$A$ is {\em $\cs$-purely  infinite simple} if $A$ is simple ($A$ has no nontrivial closed ideals) and  every hereditary subalgebra  contains an infinite projection; 
\item 
\label{A pu inf} 
$A$ is {\em $\cs$-purely infinite} if
\begin{enumerate}   
\item there are no nonzero $*$-homomorphisms from $A$ to $\mathbb{C}$; and 
\item for every $a,b \in A^+$ we have $b \cbc a$ if and only if $b \in \overline{AaA}$.
\end{enumerate}
\end{enumerate}
\end{definition}

\begin{remarks} As in the algebraic situation there are relationships between these notions.
\begin{enumerate}
\item   By \cite[Lemma~3.1]{KR}, a projection $p$ is a $\cs$-infinite projection  if and only if it is $\cs$-infinite.
Similarly $p$ is a  $\cs$-properly infinite projection if and only if $p$ is $\cs$-properly infinite.
\item    By Theorem~4.16 of \cite{KR}, a $\cs$-algebra 
$A$ is $\cs$-purely infinite if and only if every nonzero positive element of $A$ is $\cs$-properly infinite.  There is no algebraic analogue of \cite[Theorem~4.16]{KR}; see \cite[Examples~3.5]{AGPS} for a counter example.  
\item By \cite[Proposition~5.4]{KR} a $C^*$-algebra $A$ that is  simple and $\cs$-purely infinite   is $C^*$-purely infinite simple.  By \cite[Proposition~4.7]{KR} a $C^*$-algebra $A$ is $\cs$-purely infinite simple  implies that $A$ is simple  and $A$ is $\cs$-purely infinite.  Thus the two notions of $\cs$-purely infinite are compatible for simple $C^*$-algebras. 
\end{enumerate}
\end{remarks}

\begin{lemma}
\label{lem:referee}
Let $A$ be a $\cs$-algebra and $A_0$ a dense subalgebra.
Let $p$ be a projection in $A_0$.  If $p$ is an infinite idempotent in $A_0$, then $p$ is an infinite projection in $A$.\end{lemma}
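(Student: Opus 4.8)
The plan is to pass to the corner $pAp$, where $p$ becomes the unit, and then to manufacture the required proper subprojection by a polar‑decomposition argument.

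Unwinding the hypothesis, there are an idempotent $q\in A_0$ and elements $s,t\in A_0$ with $st=p$, $ts=q$, $pq=qp=q$ and $q\neq p$. Since $p$ is a projection, $B:=pAp$ is a unital $C^*$-subalgebra of $A$ with unit $p$. I would set $x:=ps$ and $y:=tp$; a short computation from the three relations above (for instance $ps=(st)s=s(ts)=sq$, which gives $px=xp=x$, and symmetrically $py=yp=y$, while $pqp=q$) shows $x,y,q\in B$, $xy=p$ and $yx=q$. In particular $x$ is right invertible in $B$.

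Next I would produce a projection in $B$ equivalent to $p$. From $0\le yy^*\le\|y\|^2 p$ we get $p=(xy)(xy)^*=x(yy^*)x^*\le\|y\|^2\,xx^*$, so $xx^*\ge\|y\|^{-2}p$ (here $y\neq0$ since $xy=p\neq0$, an infinite idempotent being nonzero), and hence $xx^*$ is invertible in $B$. Put $u:=x^*(xx^*)^{-1/2}\in B$. Then $u^*u=p$ and $p':=uu^*=x^*(xx^*)^{-1}x$ is a projection in $B$, so $p'\leq_{\cs}p$ automatically (as $p$ is the unit of $B$) and $p\sim_{\cs}p'$ (witnessed by $u$, since $u^*u=p$ and $uu^*=p'$). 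To see $p'\neq p$ — the one place $q\neq p$ is used — note that $uu^*=p$ would make $u$ unitary in $B$, forcing $x=(xx^*)^{1/2}u^*$ to be invertible in $B$; then $xy=p$ gives $y=x^{-1}$ and so $q=yx=p$, a contradiction. Hence $p\sim_{\cs}p'<_{\cs}p$, so $p$ is an infinite projection in $B$; since $B^+\subseteq A^+$, the same $u$ and $p'$ exhibit $p$ as an infinite projection in $A$.

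The main obstacle is structural rather than computational: the idempotent $q$ witnessing infiniteness of $p$ in $A_0$ need not be a projection, and $s,t$ are not partial isometries, so one cannot polar‑decompose directly in $A$ (which in any case need not be unital). The fix is the two moves above — replacing $s,t$ by $ps,tp$ to obtain genuine elements of the unital corner $B=pAp$ in which $x=ps$ is honestly right invertible, and using $q\neq p$ precisely to rule out $p'=p$. Everything else is a routine $C^*$-computation.
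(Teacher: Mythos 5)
Your proof is correct, but it takes a genuinely different route from the paper's. The paper writes $p\sima e\lessa p$ with $e$ an idempotent in $A_0$, invokes \cite[Exercise~3.11]{RLL} as a black box to replace $e$ by a projection $q\in A$ with $qA=eA$ and $e\sima q$, transfers the algebraic equivalence to $p\sim_{\cs}q$, and then reads off $q\leq_{\cs}p$ and $q\neq p$ from the strict inclusion of right ideals $qA=eA\subsetneq pA$. You instead cut everything down to the unital corner $B=pAp$, observe that $x=ps$ is right invertible there, deduce that $xx^*$ is invertible in $B$, and produce the subprojection explicitly as $p'=x^*(xx^*)^{-1}x=uu^*$ with $u=x^*(xx^*)^{-1/2}$; the hypothesis $q\neq p$ enters only to rule out $x$ being (two-sided) invertible, which is exactly what $p'=p$ would force. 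Your computations check out: $x,y=tp$ do land in $B$ (via $ps=sq$ and $tp=qt$), $xy=p$, $yx=q$, $p\le\|y\|^2xx^*$ gives invertibility of $xx^*$ in $B$, and $u^*u=p$, $uu^*=p'\leq_{\cs}p$, $p'\neq p$. Incidentally, your $p'$ generates the same right ideal as $q$ (one checks $p'q=p'$ and $qp'=q$), so the two proofs manufacture essentially the same projection; what your version buys is self-containedness — no appeal to the RLL exercises on projectionizing idempotents or on upgrading $\sima$ to $\sim_{\cs}$ — at the cost of a slightly longer hands-on computation. Both are valid.
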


\begin{proof}
Since $p$ is an infinite idempotent in $A_0$, there exists an idempotent $e \in A_0$
such that $p \sima e \lessa p$.  Thus $eA \subsetneq pA.$
By \cite[Exercise~3.11(1)]{RLL} there exists a projection $q \in A$  such that $e \sima q$ in $A$ and $eA = qA$.  
Since $\sima$ is transitive, we have $p \sima q$ in $A$.  Thus we have $p \sim_{\cs} q$ by, for example, \cite[Exercise~3.11(2)]{RLL}.
Also, since $pA \neq eA = qA$,  we have $q \neq p$.  
To see that $q \leq_{\cs} p$, we show that $pq=q$, which suffices by \cite[Theorem~2.3.2]{Murphy}.  Since $qA \subsetneq pA$, there exists $a \in A$ such that $q^2 = pa$, that is $q =pa$.  So $pq = p^2a = pa = q.$ Thus $p$ is an infinite projection.
\end{proof}

\subsection*{Groupoids and groupoid algebras} 

Let $G$ be a topological groupoid.   We write $\go$ for the unit space of $G$, and 
$r, s: G\to \go$ for the range and source maps.  An open subset $B$ of $G$ is an \emph{open bisection} 
if the source and range maps restrict to homeomorphisms on $B$, and  $r(B)$ and $s(B)$ are open subsets of $\go$. 
We say that $G$ is \emph{ample} if $G$ has a basis of compact open bisections.  Equivalently, $G$ is 
ample if  $G$ is \'etale and its unit space $\go$ is totally disconnected  (see \cite[Proposition~4.1]{Exel2}).  
In this paper, we consider only ample Hausdorff groupoids.
 
Let $G$ be an ample Hausdorff groupoid and let $K$ be a field. The \emph{Steinberg algebra} $A_K(G)$ is the 
$K$-algebra of functions from $G$ to $K$ that are both locally constant and compactly supported.
Addition is defined pointwise. The multiplication is convolution: for $f,g\in A_K(G)$ we have
\[(f * g)(\gamma) = \sum_{\alpha\beta = \gamma} f(\alpha)g(\beta).\]
 When $K$ has an involution $k\mapsto \overline{k}$, we define an involution $f\mapsto f^*$ on $A_K(G)$ by
\[(f^*)(\gamma) = \overline{f(\gamma^{-1})}.\]
We write $A(G)$ for $A_{\mathbb{C}}(G)$.

Let $B$ be a compact open bisection, and write $1_B$ for the characteristic function from $B$ to $K$.
We will often use  that every $f$ in $A_K(G)$ can be expressed as a  sum
\[f = \sum_{B \in F} c_B1_B, \]
where $F$ is a finite set of disjoint compact open bisections and $0\neq c_B \in K$ for each $B\in F$.  
For compact open bisections $B$ and $C$ we have 
\[
1_B*1_C=1_{BC} \quad \text{and}\quad 1^*_B=1_{B\inv}.
\]
Indeed, the compact open bisections in an ample groupoid form an inverse semigroup under the operations
\[
BC=\{\gamma\eta:r(\eta)=s(\gamma), \gamma\in B, \eta\in C\}\quad\text{and}\quad B\inv=\{\gamma\inv:\gamma\in B\},
\]
and the multiplication in $A_K(G)$ agrees with these operations.  
Every Steinberg algebra is s-unital by \cite[Lemma~2.6]{CEP}.  See \cite{CFST} and \cite{Steinberg} for more information about Steinberg algebras.

The \emph{reduced groupoid $\cs$-algebra} $C^*_r(G)$ was introduced in \cite{Renault}, and for  
ample groupoids the definition reduces to the following. With convolution and involution as above, the set $C_c(G)$ of continuous compactly supported function from $G$ to $\C$ is a $*$-algebra. Let $u\in \go$. Set $G_u=\{\gamma\in G: s(\gamma)=u\}$.  Write $\delta_{\gamma}$ for the basis of point masses in $\ell^2(G_u)$. There is a $*$-representation $R_u : C_c(G) \to \mathcal{B}(\ell^2(G_u))$ such that, for $f \in C_c(G)$ and $\gamma \in G_u$,
\[
 R_u(f)\delta_{\gamma} = \sum_{\{\alpha\in G : s(\alpha)= r(\gamma)\}}
 f(\alpha)\delta_{\alpha\gamma}.
\]
The reduced $C^*$-algebra $\cs_r(G)$ is the completion of the image of $C_c(G)$ under the direct sum $\bigoplus_{u \in \go} R_u$.
By construction, the complex Steinberg algebra $A(G)$ is a dense $*$-subalgebra of $C^*_r(G)$.

We say that  $G$ is \emph{effective} if for every compact open bisection 
$B \subseteq G \setminus \go$, there exists $\gamma \in B$ such that
$s(\gamma) \neq r(\gamma)$.   (See \cite[Lemma~3.1]{BCFS} and \cite[Corollary~3.3]{Ren08}
 for some equivalent characterisations of effective groupoids.)
  We say that $G$ is \emph{minimal} if $\go$ has no nontrivial open invariant subsets.
By \cite[Theorem~4.1]{CE}, $A_K(G)$ is algebraically simple if and only 
if $G$ is  minimal and effective.  
If $G$ is second-countable,  minimal and effective, 
then $\cs_r(G)$ is $\cs$-simple, and the converse holds if $G$ is amenable (see, for example, \cite[Theorem~5.1]{BCFS}).

Since $G$ is ample, the unit space $\go$ is open in $G$, and we may view $C_0(\go)$ as a subalgebra of $\cs_r(G)$. For $f,g \in C_0(\go)$ we have 
\[f \leq_{\cs} g \text{ if and only if } f(x)\leq g(x) \text{ for all } x \in \go.\]

%

\section{Algebraically purely infinite simple Steinberg algebras}
\label{section:1U}

By combining \cite[Theorem~4.1]{BCS} and Lemma~\ref{lem:referee}, we get the following interesting result.

\begin{cor}
\label{cor:main}
 Let $G$ be a second-countable ample Hausdorff  groupoid. Suppose that $G$ is minimal and effective.  If
$A(G)$ is  algebraically purely infinite, then $\cs_r(G)$ is $\cs$-purely infinite.
\end{cor}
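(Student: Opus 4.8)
The plan is to reduce everything to \cite[Theorem~4.1]{BCS}, which asserts that for a minimal, effective, second-countable ample Hausdorff groupoid $G$ the $\cs$-algebra $\cs_r(G)$ is purely infinite precisely when $1_U$ is an infinite projection in $\cs_r(G)$ for every compact open $U\subseteq\go$. So the proof breaks into three parts: first, show that each such $1_U$ is an infinite idempotent in $A(G)$; second, promote this to an infinite projection in $\cs_r(G)$ via Lemma~\ref{lem:referee}; third, apply \cite[Theorem~4.1]{BCS} and pass from ``purely infinite'' to ``$\cs$-purely infinite'' using simplicity of $\cs_r(G)$.

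For the first part I would argue as follows. Since $G$ is minimal and effective, $A(G)$ is algebraically simple by \cite[Theorem~4.1]{CE}. As $A(G)$ has local units and is, by hypothesis, algebraically purely infinite, being simultaneously simple and algebraically purely infinite it is algebraically purely infinite simple by Remark~\ref{rmk resolve ring}(3). Fix a nonempty compact open $U\subseteq\go$ (if $U=\emptyset$ there is nothing to check). Then $1_UA(G)$ is a nonzero right ideal of the algebraically purely infinite simple ring $A(G)$, hence contains an infinite idempotent $q$. Writing $q=1_Ua$ with $a\in A(G)$ gives $1_Uq=q$, and therefore $q\,1_U\,q=q(1_Uq)=q^2=q$, so that $q\cba 1_U$. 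Since $A(G)$ is s-unital and $q$ is an infinite idempotent, Lemma~\ref{lem:3.9}(1) forces $1_U$ to be algebraically infinite, and then $1_U$ is an infinite idempotent in $A(G)$ by Remark~\ref{rmk resolve ring}(2).

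For the remaining two parts: each $1_U$ is a self-adjoint idempotent, hence a projection, lying in the dense $*$-subalgebra $A(G)$ of $\cs_r(G)$, so Lemma~\ref{lem:referee} makes each $1_U$ an infinite projection in $\cs_r(G)$. Because $G$ is second-countable, minimal and effective, $\cs_r(G)$ is simple (see \cite[Theorem~5.1]{BCFS}), so \cite[Theorem~4.1]{BCS} applies and yields that $\cs_r(G)$ is purely infinite. Being simple and purely infinite, $\cs_r(G)$ is $\cs$-purely infinite by the remarks following Definition~\ref{C* def}.

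The only step carrying real content is the first one. Its crux is that simplicity of $A(G)$ upgrades the abstract comparison axiom of Definition~\ref{defn ring}(12) to the statement that $A(G)$ is algebraically purely infinite simple, which in turn guarantees an infinite idempotent inside every nonzero right ideal; the one-line computation $q=q1_Uq$ together with Lemma~\ref{lem:3.9}(1), which requires only s-unitality, then transfers infiniteness back onto $1_U$. The point I would be most careful about is checking that $A(G)$ genuinely has local units, not merely s-unitality, so that Remark~\ref{rmk resolve ring}(3) is legitimately applicable; this is standard for Steinberg algebras of Hausdorff ample groupoids, the local units being furnished by characteristic functions of suitable compact open subsets of $\go$.
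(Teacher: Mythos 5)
Your argument is correct and follows essentially the same route as the paper: reduce via \cite[Theorem~4.1]{BCS} to showing each $1_U$ is an infinite projection, obtain this from Lemma~\ref{lem:referee} once $1_U$ is known to be an infinite idempotent in $A(G)$. The only (harmless) difference is that where the paper simply cites \cite[Proposition~1.5]{AGP} to conclude that every nonzero idempotent in the purely infinite simple ring $A(G)$ is infinite, you re-derive that special case by hand from the right ideal $1_UA(G)$, the identity $q=q1_Uq$, and Lemma~\ref{lem:3.9}.
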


\begin{proof}
Since $G$ is minimal and effective, by \cite[Theorem~4.1]{BCS} it suffices to show that $1_U$ is an infinite projection for every compact open $U \subseteq \go$.	Since $A(G)$ is algebraically purely infinite simple, every idempotent is infinite \cite[Proposition~1.5]{AGP}.  Thus every $1_U$ with $U \subseteq \go$ is an infinite idempotent in $A(G)$ and hence the result follows from Lemma~\ref{lem:referee}.
\end{proof}

We now prove the algebraic analogue of Theorem~4.1 in \cite{BCS}.
\begin{thm}
\label{thm:pis} Let $K$ be a field and let $G$ be an ample Hausdorff groupoid. Suppose that $G$ is minimal and effective.  Then  $A_K(G)$ is algebraically purely infinite simple if and only if 
$1_U$ is an infinite idempotent for every compact open $U \subseteq \go$.
\end{thm}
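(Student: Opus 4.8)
The forward direction is immediate: if $A_K(G)$ is algebraically purely infinite simple, then every nonzero right ideal contains an infinite idempotent, but in fact every nonzero idempotent in such a ring is infinite by \cite[Proposition~1.5]{AGP} (the algebraic analogue of the $C^*$-fact that in a purely infinite simple algebra all idempotents are infinite). In particular $1_U$ is an infinite idempotent for every compact open $U\subseteq\go$. So the substantive content is the converse.

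For the converse, assume $1_U$ is an infinite idempotent for every compact open $U\subseteq\go$, and we want algebraic pure infinite simplicity of $A_K(G)$. Since $A_K(G)$ has local units (it is a union of corners $1_U A_K(G) 1_U$, and these are unital), by Remarks~\ref{rmk resolve ring}(3) it suffices to show (i) $A_K(G)$ is not a division ring and (ii) $b\cba a$ for all $a,b\in A_K(G)$ with $a\ne0$. Simplicity of $A_K(G)$ is already known from minimality and effectiveness via \cite[Theorem~4.1]{CE}, so in view of Remarks~\ref{rmk resolve ring}(3) it would actually be enough to show $A_K(G)$ is algebraically purely infinite, i.e. no quotient is a division ring (automatic once we know it is simple and not a division ring) and the Cuntz-comparison condition \ref{b} holds. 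Condition (i) is easy: if $A_K(G)$ were a division ring it would be unital, forcing $\go$ compact and $1_{\go}$ the identity; but minimal-and-effective with $1_{\go}$ infinite rules out being a division ring (an infinite idempotent cannot be the identity of a division ring). The real work is the comparison $b\cba a$ for $a\ne0$.

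The key step — and the main obstacle — is a purely algebraic analogue of the $C^*$-argument in \cite[Theorem~4.1]{BCS}. The plan is: given $0\ne a\in A_K(G)$, write $a=\sum_{B\in F}c_B 1_B$ with $F$ a finite set of disjoint compact open bisections. Use effectiveness (in the form that lets one "extract the diagonal": there is a compact open $V\subseteq\go$ on which the convolution product $a^* * a$, or some rescaling, dominates $1_V$ in a suitable algebraic sense — concretely, one finds $x,y\in A_K(G)$ with $x a y = 1_V$ for some nonempty compact open $V\subseteq\go$). This is the algebraic counterpart of Lemma~3.1 of \cite{BCFS}; it exploits that an effective groupoid has a dense set of units with trivial isotropy so that after cutting down, $a$ looks like a nonzero function supported on $\go$, and then by scaling and restricting we get $1_V\cba a$. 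Next, use minimality: for the given target $b$, write $b=\sum_{C\in F'}d_C 1_C$; each $s(C)$ and $r(C)$ is a compact open subset of $\go$, and by minimality the $G$-orbit of $V$ is dense, so a standard compactness/covering argument produces a compact open bisection $W$ with $s(W)\subseteq V$ and $r(W)=s(C)$ (respectively covering the relevant pieces), giving $1_{s(C)}\cba 1_V$ via $1_W$. Combining $1_{s(C)}\cba 1_V\cba a$ and then $1_C = 1_C * 1_{s(C)}$ (so $1_C\cba 1_{s(C)}$, since $1_C\leqa$-style domination: $1_C = 1_C\,1_{s(C)}$ on the right and $1_{r(C)}\,1_C$ on the left makes $1_C = 1_{r(C)}1_C 1_{s(C)}$, witnessing $1_C\cba 1_{s(C)}$) yields $1_C\cba a$ for each $C\in F'$, hence $b=\sum d_C 1_C\cba a$ after absorbing the finitely many pieces into a single matrix amplification.

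The one genuinely delicate point is handling the "sum over finitely many bisections" cleanly: $\cba$ is defined via rectangular matrices over $R$, and I will need that $x_1\cba a$ and $x_2\cba a$ imply $x_1+x_2\cba a$ when $a$ is properly infinite (which follows here because $1_V\cba a$ and $1_V$ is an infinite, indeed — after absorbing — properly infinite idempotent, so $a\oplus a\cba a$, then iterate). So the logical skeleton is: (1) $1_V\cba a$ for some nonempty compact open $V$ (effectiveness); (2) $1_V$ is infinite hence $a$ is algebraically infinite by Lemma~\ref{lem:3.9}(1), and one upgrades to $a\oplus a\cba a$ using that $1_V$ can be chosen properly infinite (any nonzero corner contains an infinite idempotent, and a minor argument makes it properly infinite, or one invokes \cite{AGP}); (3) for any $b$, split $b$ into finitely many $1_C$-pieces, dominate each by $1_V$ using minimality, hence by $a$; (4) reassemble using proper infiniteness of $a$ to conclude $b\cba a$. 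Step (1) — the algebraic diagonal-extraction from effectiveness — is where I expect to spend the most effort, essentially re-proving the algebraic input that already underlies \cite[Theorem~4.1]{CE}.
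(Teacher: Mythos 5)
Your forward direction coincides with the paper's (both invoke \cite[Proposition~1.5]{AGP}), but your converse takes a genuinely different and substantially longer route. The paper verifies Definition~\ref{defn ring}(10) directly: given $0\neq b$ in a right ideal $I$, it writes $b=\sum_{B\in F}c_B1_B$, fixes $B_0\in F$, and uses effectiveness (via \cite[Lemma~3.1(4)]{BCFS}) to find a nonempty compact open $V\subseteq r(B_0)$ killing the off-diagonal part, so that $1_V(c_{B_0}^{-1}b1_{B_0^{-1}})1_V=1_V$; then $p:=c_{B_0}^{-1}b1_{B_0^{-1}}1_V$ is an idempotent lying in $I$ (only right multiplications of $b$ are used) with $p\sima 1_V$, hence infinite by Lemma~\ref{lem:3.9}. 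That is the whole proof: no comparison of arbitrary pairs $b\cba a$, no use of proper infiniteness, and minimality enters only through simplicity via \cite[Theorem~4.1]{CE}. You instead establish the equivalent characterisation of Remarks~\ref{rmk resolve ring}(3), proving $b\cba a$ for all $a\neq 0$. Your skeleton (diagonal extraction $1_V\cba a$; domination of each $1_C$ by $1_V$; reassembly via proper infiniteness) is sound and is essentially the argument the paper reserves for the larger algebra $B(G)$ in Proposition~\ref{prop:Binf}; what it buys is the stronger comparison statement, at the cost of more machinery.

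Two points in your route need more than the hand-wave you give them. First, the upgrade from ``$1_V$ infinite'' to ``$1_V$ properly infinite'' is not automatic from the hypotheses; you need the fact that in a simple ring with local units every infinite idempotent is properly infinite (write $p=q+e_1$ with $p\sima q$, propagate $e_1$ to infinitely many pairwise orthogonal equivalent subidempotents of $p$, and use $RpR=R$ to get $p\oplus p\cba e_1^{\oplus 2n}\cba p$). Citing \cite{AGP} here risks circularity, since the relevant statements there are about rings already known to be purely infinite simple. Second, your minimality step as stated is wrong: a single compact open bisection $W$ with $r(W)=s(C)$ and $s(W)\subseteq V$ need not exist, because disjointifying the ranges of a finite cover of bisections does not make their sources disjoint. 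The correct fix is the one you already gesture at for sums: use simplicity to write $1_{s(C)}=\sum_i x_i1_Vy_i$, deduce $1_{s(C)}\cba 1_V^{\oplus n}$, and absorb using proper infiniteness of $1_V$ (compare Lemma~\ref{lem:cbccba}). With those two repairs your argument goes through, but note that the paper's right-ideal argument avoids both issues entirely.
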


\begin{proof} By \cite[Theorem~4.1]{CE}, $A_K(G)$ is algebraically simple if and only 
if $G$ is  minimal and effective.  

First suppose that $A_K(G)$ is algebraically purely infinite simple. Then all nonzero idempotents are infinite by \cite[Proposition~1.5]{AGP}. In particular,  $1_U$ is an infinite idempotent for every compact open $U \subseteq \go$.

Conversely, assume $1_U$ is an infinite idempotent for every compact open $U \subseteq \go$.
Fix a nonzero right ideal $I \subseteq A_K(G)$. We need to show that $I$ contains an infinite idempotent. Let $0 \neq b\in I$ and write
\[b = \sum_{B \in F} c_B1_B\]
where $F$ is a finite  set of disjoint compact open bisections and $0\neq c_B\in K$  for each $B\in F$.  Fix $B_0 \in F$.
Then 
\[
 c_{B_0}^{-1}b1_{B_0^{-1}} = 1_{r(B_0)} + \sum_{B \in F, B \neq B_0} c_{B_0}^{-1}c^{}_B1_{BB_0^{-1}}.
\]
Since $F$ is a disjoint collection,  for each $B \neq B_0$ we have $BB_0^{-1} \subseteq G \setminus \go$.
So 
\[L:= \bigcup_{B \in F, B \neq B_0} BB_0^{-1} \]
is a compact open subset of $G \setminus \go$.
Let $U:=r(B_0)$. Then $U$ is a compact open subset of $\go$.  
Since $G$ is effective, we apply \cite[Lemma~3.1(4)]{BCFS}
to get a nonempty open set $V \subseteq U$ such that $VLV = \emptyset$.
Now 
\begin{align*}
1_V(  c_{B_0}^{-1}b1_{B_0^{-1}})1_V
&= 1_V (1_{r(B_0)} + \sum_{B \in F, B \neq B_0} c_{B_0}^{-1}c^{}_B1_{BB_0^{-1}})1_V\\
&= 1_V + \sum_{B \in F, B \neq B_0} c_{B_0}^{-1}c^{}_B1_{VBB_0^{-1}V}\\
&=1_V.
\end{align*}
Set $p:= (  c_{B_0}^{-1}b1_{B_0^{-1}})1_V$. Then $p\in I$ and  \[p^2 = (c_{B_0}^{-1}b1_{B_0^{-1}})\left(1_V(  c_{B_0}^{-1}b1_{B_0^{-1}})1_V\right)
 = (  c_{B_0}^{-1}b1_{B_0^{-1}}) 1_V  = p.\]
Thus  $p$ is a nonzero idempotent in $I$. Further, 
$1_Vp = 1_V$ and $p1_V = p$, and hence $p \sima 1_V$. 
\end{proof}

\begin{cor}
 \label{cor:basis}
 Let $K$ be a field, let $G$ be an ample Hausdorff groupoid and let $\mathcal{B}$ be a basis for $\go$ consisting of compact open subsets. Suppose that $G$ is  minimal and effective.   Then $A_K(G)$ is algebraically purely infinite simple if and only if  $1_U$ is an algebraically infinite idempotent for every compact open $U \in \mathcal{B}$. 
\end{cor}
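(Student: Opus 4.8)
The plan is to derive Corollary~\ref{cor:basis} from Theorem~\ref{thm:pis}, so the only thing that needs proof is the equivalence between ``$1_U$ is an infinite idempotent for every compact open $U\subseteq\go$'' and ``$1_U$ is an infinite idempotent for every $U\in\mathcal{B}$''. The forward direction is trivial since $\mathcal{B}$ consists of compact open subsets of $\go$. For the converse, suppose $1_U$ is an infinite idempotent for every $U\in\mathcal{B}$, and let $W\subseteq\go$ be an arbitrary compact open set. Because $\mathcal{B}$ is a basis for $\go$ and $W$ is compact open, $W$ is a finite union of sets from $\mathcal{B}$; by the usual disjointification trick in a totally disconnected space (intersecting and taking complements within $W$), we can write $W$ as a finite disjoint union $W=\bigsqcup_{i=1}^n W_i$ with each $W_i\in\mathcal{B}$ — or at least with each $W_i$ compact open and contained in some basis element. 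In particular $W$ contains a nonempty $W_1\in\mathcal{B}$, so $1_{W_1}$ is an infinite idempotent and $1_{W_1}\leqa 1_W$ (since $1_{W_1}1_W=1_W1_{W_1}=1_{W_1}$). Then Lemma~\ref{lem:3.9}\eqref{it2:lem3.9} gives that $1_W$ is an infinite idempotent. Now apply Theorem~\ref{thm:pis}.

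Concretely, the key steps in order are: first, observe that Theorem~\ref{thm:pis} reduces the statement to checking the idempotent condition only for basis elements; second, take an arbitrary compact open $W\subseteq\go$ and, using that $\go$ is totally disconnected (hence $\mathcal{B}$ really is a basis of compact open sets) together with compactness of $W$, extract a single nonempty basic open set $W_1\in\mathcal{B}$ with $W_1\subseteq W$; third, note $1_{W_1}\leqa 1_W$ and invoke Lemma~\ref{lem:3.9}\eqref{it2:lem3.9} to conclude $1_W$ is infinite; fourth, feed this back into Theorem~\ref{thm:pis}.

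There is essentially no main obstacle here — the corollary is a routine consequence of the theorem plus the ``passing up'' behaviour of infinite idempotents under $\leqa$. The only mild subtlety is making sure that a nonempty compact open $W$ actually contains a nonempty element of $\mathcal{B}$: this is immediate because $\mathcal{B}$ is a basis for the topology of $\go$, so any nonempty open $W$ contains some nonempty $W_1\in\mathcal{B}$. One should also note in passing that the case $W=\emptyset$ is vacuous since $1_\emptyset=0$ is not claimed to be infinite (and an empty right ideal argument is not needed). So the write-up will be short: one sentence invoking Theorem~\ref{thm:pis}, one sentence producing $W_1\subseteq W$ from the basis property, one sentence applying Lemma~\ref{lem:3.9}\eqref{it2:lem3.9}, and a concluding sentence.
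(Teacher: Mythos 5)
Your proposal is correct and is essentially the paper's own argument: fix an arbitrary compact open $V\subseteq\go$, pick a nonempty basis element $W\in\mathcal{B}$ with $W\subseteq V$, note $1_W\leqa 1_V$, apply Lemma~\ref{lem:3.9}\eqref{it2:lem3.9}, and then invoke Theorem~\ref{thm:pis}. The aside about disjointifying $V$ into finitely many basis elements is unnecessary (as you yourself note, a single nonempty $W_1\subseteq V$ suffices) and can simply be dropped.
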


\begin{proof}
The forward implication is immediate from Theorem~\ref{thm:pis}.  For the reverse implication, suppose  that
$1_U$ is an algebraically infinite idempotent for every compact open $U \in \mathcal{B}$.
Fix a compact open subset $V \subseteq \go$.  By Theorem~\ref{thm:pis}, it suffices to show $1_V$ is an 
infinite idempotent.  Since $\mathcal{B}$ is a basis, there exists $W \in \mathcal{B}$ such
that $W \subseteq V$.  
Then
\[1_W1_V = 1_W = 1_V1_W,\]
and so $1_W \leqa 1_V$.  By assumption, $1_W$ is an algebraically infinite idempotent. Since $A_K(G)$ is s-unital, Lemma~\ref{lem:3.9}(\ref{it2:lem3.9}) applies, and 
$1_V$ is an algebraically infinite idempotent as well.  
\end{proof}

A Hausdorff ample groupoid $G$ is \emph{locally contracting} \cite[Definition~2.1]{AD} if for every compact open $U \subseteq \go$,  there exists a nonempty compact open bisection $B \subseteq G$  such that \[s(B) \subsetneq r(B) \subseteq U.\] 
The following corollary to Theorem~\ref{thm:pis} is the algebraic analogue of 
\cite[Proposition~2.4]{AD} which gives a useful sufficient condition for a $C^*$-algebra to be $\cs$-purely infinite simple. 

\begin{cor}
\label{cor pis} Let $K$ be a field and let $G$ be an ample Hausdorff groupoid. Suppose  that $G$ is  minimal, effective and locally contracting. Then $A_K(G)$ is algebraically purely infinite simple.
\end{cor}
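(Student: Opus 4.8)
The plan is to deduce Corollary~\ref{cor pis} directly from Theorem~\ref{thm:pis}, so the only work is to verify that the hypothesis ``$1_U$ is an infinite idempotent for every compact open $U\subseteq\go$'' follows from local contraction. Fix a compact open $U\subseteq\go$. By the definition of locally contracting there is a nonempty compact open bisection $B\subseteq G$ with $s(B)\subsetneq r(B)\subseteq U$. I would like to use $B$ to witness that $1_{r(B)}$ is an infinite idempotent, and then promote this up to $1_U$.

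First I would check that $1_B$ implements an equivalence in the algebraic sense. Since $B$ is a compact open bisection, $1_B*1_{B\inv}=1_{BB\inv}=1_{r(B)}$ and $1_{B\inv}*1_B=1_{B\inv B}=1_{s(B)}$. Hence $1_{r(B)}\sima 1_{s(B)}$ in $A_K(G)$. Next, because $s(B)\subseteq r(B)$ we have $1_{s(B)}1_{r(B)}=1_{r(B)}1_{s(B)}=1_{s(B)}$, so $1_{s(B)}\leqa 1_{r(B)}$; and since $s(B)\subsetneq r(B)$ these characteristic functions are distinct (they differ at any point of $r(B)\setminus s(B)$, which is nonempty), so in fact $1_{s(B)}\lessa 1_{r(B)}$. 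Combining, $1_{r(B)}\sima 1_{s(B)}\lessa 1_{r(B)}$, which is exactly the definition of $1_{r(B)}$ being an infinite idempotent.

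Finally I would pass from $r(B)$ to $U$. Since $r(B)\subseteq U$, we have $1_{r(B)}1_U=1_U1_{r(B)}=1_{r(B)}$, i.e.\ $1_{r(B)}\leqa 1_U$. The algebra $A_K(G)$ is s-unital, so Lemma~\ref{lem:3.9}\eqref{it2:lem3.9} applies and yields that $1_U$ is (algebraically) an infinite idempotent; by \cite[Remarks~2.7]{AGPS} this is the same as being an infinite idempotent. As $U$ was an arbitrary compact open subset of $\go$, Theorem~\ref{thm:pis} now gives that $A_K(G)$ is algebraically purely infinite simple.

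There is no real obstacle here --- everything is bookkeeping with the inverse-semigroup identities $1_B*1_C=1_{BC}$, $1_B^*=1_{B\inv}$ and with the partial order $\leqa$. The only point needing a word of care is the strictness $1_{s(B)}\neq 1_{r(B)}$, which uses $r(B)\setminus s(B)\neq\emptyset$; this is guaranteed by the strict inclusion $s(B)\subsetneq r(B)$ in the definition of locally contracting. Note that minimality and effectiveness are used only through Theorem~\ref{thm:pis} (to know that $A_K(G)$ is simple and that the infinite-idempotent condition suffices), not in this argument itself.
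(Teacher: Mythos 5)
Your proof is correct and is essentially identical to the paper's own argument: both use the bisection $B$ with $s(B)\subsetneq r(B)\subseteq U$ to show $1_{r(B)}\sima 1_{s(B)}\lessa 1_{r(B)}$, then pass to $1_U$ via $\leqa$ and Lemma~\ref{lem:3.9}\eqref{it2:lem3.9}, and conclude by Theorem~\ref{thm:pis}. No issues.
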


\begin{proof} Fix $U\subseteq \go$.  By Theorem~\ref{thm:pis}, it suffices to show that $1_U$ is an infinite idempotent.
 Since $G$ is locally contracting, there exists a compact open bisection $B \subseteq G$  such that $s(B) \subsetneq r(B) \subseteq U$.
Then \[1_{s(B)}1_{r(B)}=1_{s(B)}=1_{r(B)}1_{s(B)}\] gives $1_{s(B)}\leqa 1_{r(B)}$, and  $s(B)\subsetneq r(B)$ gives  
$1_{s(B)}\lessa 1_{r(B)}$. Further, 
\[1_{B^{-1}}1_{B}=1_{B^{-1}B}=1_{s(B)} \quad \text{and} \quad 1_{B}1_{B^{-1}}=1_{BB^{-1}}=1_{r(B)}\] and so $1_{r(B)}\sima 1_{s(B)}$. Thus 
\[
1_{r(B)}\sima 1_{s(B)}\lessa 1_{r(B)}.
\]
 Therefore $1_{r(B)}$ is an infinite idempotent. Since $1_{r(B)}\leqa 1_U$, $1_U$ is algebraically infinite by Lemma~\ref{lem:3.9}\eqref{it2:lem3.9}. 
By \cite[Remarks~2.7]{AGPS}, $1_U$ is an algebraically infinite idempotent as required.  
\end{proof}

\section{A larger dense subalgebra}
\label{section:main}

We want to connect the notion of $\cs$-purely infinite simple in the $\cs$-algebra with some kind of algebraic pure infiniteness.  Corollary~\ref{cor:main} is a start but we seek an \emph{if and only if} characterisation and $A(G)$ seems too small.
Even with the assumption that $G$ is minimal and effective, if $\cs_r(G)$ does not have an identity, then it is not s-unital and it is not algebraically simple: for example, 
the Pedersen ideal (see \cite[Theorem~5.6.1]{ped})  is always a nontrivial dense nonclosed ideal.

We now introduce a large s-unital subalgebra of $\cs_r(G)$.
Define \[B(G):= \{1_U x 1_U : x \in \cs_r(G) \text{ and } U \subseteq \go \text{ is a compact open set}\}.\]
It is straightforward to check that $A(G)$ is a subset of $B(G)$.  Since the Pedersen ideal contains all the projections of $\cs_r(G)$ (see \cite[5.6.3]{ped}), it contains all elements of the form $1_U$ for compact open $U \subseteq \go$.  Thus $B(G)$ is contained in the Pedersen ideal.

We apply techniques from  \cite[Theorem~3.17]{AGPS} to see that when $G$ is minimal and effective, $B(G)$ is algebraically properly infinite if and only if $C^*_r(G)$ is $C^*$-purely infinite.  
\begin{lemma}\label{lem:B}  Let $G$ be an ample Hausdorff groupoid. 
Then $B(G)$ is a dense s-unital subalgebra of $\cs_r(G)$.
\end{lemma}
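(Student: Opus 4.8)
The plan is to verify three things: that $B(G)$ is a subalgebra, that it is s-unital, and that it is dense in $\cs_r(G)$. Throughout I will use that for compact open $U,V\subseteq\go$, the set $U\cup V$ is again compact open, and that $1_U$ is a projection in $\cs_r(G)$ with $1_U 1_V = 1_{U\cap V}$; in particular $1_{U\cup V}$ acts as a unit on both $1_U\cs_r(G)1_U$ and $1_V\cs_r(G)1_V$.

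First I would show $B(G)$ is closed under the ring operations. Given $a = 1_U x 1_U$ and $b = 1_V y 1_V$ in $B(G)$, set $W := U\cup V$, so that $1_W 1_U = 1_U$ and $1_W 1_V = 1_V$, hence $1_W a 1_W = a$ and $1_W b 1_W = b$. Then $a + b = 1_W(a+b)1_W \in B(G)$ and $ab = 1_W(ab)1_W \in B(G)$; similarly scalar multiples and $a^* = 1_U x^* 1_U$ stay in $B(G)$, so $B(G)$ is a $*$-subalgebra. The same computation shows s-unitality: for any $a\in B(G)$, writing $a = 1_U x 1_U$, the projection $u := 1_U\in B(G)$ satisfies $ua = au = a$, so $B(G)$ is s-unital (indeed it has local units).

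Next, density. Since $A(G)$ is already dense in $\cs_r(G)$ by construction and $A(G)\subseteq B(G)$ (every $f\in A(G)$ is compactly supported and locally constant, so its image under the range map lies in a compact open $U\subseteq\go$ with $1_U f 1_U = f$ — this is the ``straightforward to check'' containment noted in the text, and I would spell out that $\operatorname{supp}(f)$ being compact open means $r(\operatorname{supp}(f))$ is compact open and absorbs $f$ from both sides), density of $B(G)$ is immediate: $\overline{B(G)} \supseteq \overline{A(G)} = \cs_r(G)$.

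The only mildly delicate point — and the one I would flag as the ``main obstacle,'' though it is minor — is justifying that $1_U f 1_U = f$ for $f\in A(G)$ when $U$ is chosen to contain $r(\operatorname{supp} f)$ and $s(\operatorname{supp} f)$; one must check that convolution by $1_U$ on the left restricts the range of the support and on the right restricts the source, and that $\operatorname{supp} f$ compact open forces both $r(\operatorname{supp} f)$ and $s(\operatorname{supp} f)$ to be compact open subsets of $\go$ (using that $r,s$ are local homeomorphisms and $\operatorname{supp} f$ is a finite union of compact open bisections). Everything else is the elementary ``common unit'' trick above. I expect the write-up to be short.
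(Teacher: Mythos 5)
Your proof is correct and takes essentially the same route as the paper: the common-unit trick with $W=U\cup V$ establishes closure under the ring operations, $1_U$ itself serves as the local unit for $1_Ux1_U$, and density follows from $A(G)\subseteq B(G)$ together with the density of $A(G)$ in $\cs_r(G)$. The only point you elaborate beyond the paper is the verification that $1_Uf1_U=f$ for $f\in A(G)$ with $U=r(\supp f)\cup s(\supp f)$, which the paper dismisses as ``straightforward to check''; your justification of it is correct.
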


\begin{proof} That $B(G)$ is dense in $C^*_r(G)$ follows because $A(G) \subseteq B(G)$ and $A(G)$ is dense in $\cs_r(G)$ by \cite[Proposition~4.2]{CFST}.
 For any $1_Ux1_U \in B(G)$ we have  $1_U(1_Ux1_U)1_U=1_Ux1_U$, and so $B(G)$ is s-unital.  
To see that $B(G)$ is a subalgebra, fix $b_1:=1_{U_1}x1_{U_1}$ and $b_2:=1_{U_2}x1_{U_2}$
in $B(G)$ and and $r\in \mathbb{C}$. Let $U:= U_1 \cup U_2$. Then $U$ is compact and open because the $U_i$ are.  Then
\[b_1-b_2=1_U(1_{U_1}x1_{U_1}-1_{U_2}x1_{U_2})1_U\in B(G).\]
Similarly, \[b_1b_2=1_U(1_{U_1}x1_{U_1}1_{U_2}x1_{U_2})1_U\in B(G) \quad \text{and} 
            \quad rb_1= r1_{U_1}x1_{U_1}=1_{U_1}(rx)1_{U_1} \in B(G).\qedhere\]
\end{proof}

\begin{remark}
If $\go$ is compact, then $\cs_r(G)$ is unital with unit $1_{\go}$.  Thus for any $x \in \cs_r(G)$, $x = 1_{\go}x1_{\go}$ and hence $B(G) = \cs_r(G)$.	
\end{remark}

\begin{prop}
\label{prop:1}
Let $G$ be a second-countable ample Hausdorff groupoid.  Suppose that $G$ is minimal and effective.  If $B(G)$ is algebraically properly infinite, then $\cs_r(G)$ is $\cs$-purely infinite.	
\end{prop}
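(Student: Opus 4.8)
The plan is to exploit the fact that $B(G)$ is a dense subalgebra of $\cs_r(G)$ whose positive part is rich enough to approximate any positive element of $\cs_r(G)$, and to transfer algebraic proper infiniteness of $B(G)$ to $\cs$-proper infiniteness of elements of $\cs_r(G)$, which by \cite[Theorem~4.16]{KR} forces $\cs_r(G)$ to be purely infinite. First I would recall that it suffices to show every nonzero positive $a\in\cs_r(G)$ is $\cs$-properly infinite, i.e. $a\oplus a\cbc a$. Given such $a$, and given $\varepsilon>0$, I would find $b\in B(G)$ close to $a$: since $B(G)$ is dense, approximate $a$ by some $c\in B(G)$, and then replace $c$ by $c^*c\in B(G)$ (note $B(G)$ is a $*$-subalgebra, as $1_Ux1_U\mapsto 1_Ux^*1_U$) to get a positive element of $B(G)$ close to $a$; standard functional-calculus estimates (as in \cite[Lemma~2.2]{KR} or \cite[Chapter~2]{RLL}) let me arrange that a slightly cut-down element $(a-\varepsilon)_+$ is of the form $z^*bz$ for some $z\in\cs_r(G)$ and some nonzero positive $b\in B(G)$.

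The key step is then: since $B(G)$ is algebraically properly infinite, the nonzero element $b\in B(G)$ satisfies $b\oplus b\cba b$ in $M_2(B(G))$, so there are $\alpha\in M_{12}(B(G))$, $\beta\in M_{21}(B(G))$ with $b\oplus b=\alpha b\beta$. Viewing these matrices inside $M_2(\cs_r(G))$ and using that $\cba$ implies $\cbc$-type comparison for positive elements (here I would pass through the positive element $b$: from $b\oplus b=\alpha b\beta$ one extracts, by a polar-decomposition/functional-calculus argument in the $\cs$-algebra, that $b\oplus b\cbc b$; this is essentially the content of \cite[Lemma~3.9]{AGPS} combined with the fact that $\cba$-comparison of a positive element with itself descends to Cuntz comparison, cf. \cite[Section~3]{KR}), I conclude $b$ is $\cs$-properly infinite in $\cs_r(G)$. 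Cuntz comparison is well-behaved under the operations $a\mapsto z^*az$ and under norm limits, so $(a-\varepsilon)_+\cbc z^*bz\precsim_{\cs} b$, hence $(a-\varepsilon)_+$ is $\cs$-properly infinite, and letting $\varepsilon\to 0$ and using that the $\cs$-properly infinite elements are suitably closed (or directly that $a\precsim_{\cs}(a-\varepsilon)_+$ up to the usual $\varepsilon$-cutdowns, as in \cite[Proposition~2.6]{KR}) yields that $a$ itself is $\cs$-properly infinite.

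Finally, I would check the non-degeneracy hypothesis of \cite[Theorem~4.16]{KR}, namely that $\cs_r(G)$ has no nonzero homomorphism to $\C$; but this follows since any such character would restrict to $B(G)$ and every nonzero element of $B(G)$ is properly infinite, which is incompatible with the existence of a nonzero multiplicative linear functional (a properly infinite element cannot map to a nonzero scalar). Combining, $\cs_r(G)$ is purely infinite.

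\medskip

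\noindent\textbf{Main obstacle.} The delicate point is the transfer in the key step: turning the purely \emph{algebraic} relation $b\oplus b\cba b$ in $M_2(B(G))$ into the \emph{$\cs$-algebraic} Cuntz comparison $b\oplus b\cbc b$ in $M_2(\cs_r(G))$. The algebraic relation uses exact equality with elements of $B(G)$, while $\cbc$ only asks for norm-approximation; the subtlety is that $\cba$ does not in general imply $\cbc$ for arbitrary positive elements, and one genuinely needs that $b$ is being compared with \emph{itself} (so that one can, for instance, absorb a hereditary-subalgebra argument or use that $b\oplus b$ sits in $\overline{\cs_r(G)\,b\,\cs_r(G)}$). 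Getting this implication cleanly — rather than hand-waving "algebraic comparison implies $\cs$ comparison" — is where the real work of the proof lies, and it is presumably exactly why the authors singled out the property that $B(G)\,x\,B(G)\subseteq B(G)$ for all $x\in\cs_r(G)$.
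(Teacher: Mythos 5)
Your overall strategy (reduce via \cite[Theorem~4.16]{KR} to showing that every nonzero positive element of $\cs_r(G)$ is $\cs$-properly infinite) is legitimate, and the step you flag as the ``main obstacle'' is in fact the easy part: an exact factorisation $b\oplus b=\alpha b\beta$ with $b$ positive yields $b\oplus b\cbc b$ immediately from \cite[Proposition~2.4 (iii)$\Rightarrow$(ii)]{R92} applied to constant sequences, which is exactly the device the paper uses. The genuine gap is at the end, in the inference ``$(a-\varepsilon)_+\cbc b$ and $b$ is $\cs$-properly infinite, hence $(a-\varepsilon)_+$ is $\cs$-properly infinite.'' Proper infiniteness does not pass \emph{downward} under Cuntz subequivalence: in $\mathcal{B}(H)$ the identity is properly infinite and every rank-one projection $p$ satisfies $p\cbc 1$, yet $p$ is not properly infinite. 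What is true (\cite[Lemma~3.8]{KR}) is the opposite transfer. Your argument gives $(a-\varepsilon)_+\oplus(a-\varepsilon)_+\cbc b\oplus b\cbc b$, but to conclude you also need a comparison back down, something like $b\cbc (a-\delta)_+$, which you never produce.

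The gap is repairable, but only by using the structural feature of $B(G)$ that your proof never actually invokes. One fix: from $\|a-b\|<\varepsilon$ one also gets $(b-\varepsilon)_+\cbc a$ and $(a-2\varepsilon)_+\cbc (b-\varepsilon)_+$; since $b$ lies in the \emph{closed} corner $1_U\cs_r(G)1_U\subseteq B(G)$, the functional-calculus element $(b-\varepsilon)_+$ stays in $B(G)$ and (being nonzero for small $\varepsilon$) is algebraically properly infinite there, whence $(a-2\varepsilon)_+\oplus(a-2\varepsilon)_+\cbc(b-\varepsilon)_+\oplus(b-\varepsilon)_+\cbc(b-\varepsilon)_+\cbc a$, and $\varepsilon\to 0$ via \cite[Proposition~2.6]{KR} gives $a\oplus a\cbc a$. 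The paper instead verifies Definition~\ref{C* def}\eqref{A pu inf} directly: it shows $y\cbc x$ for all positive $x,y$ by approximating $y$ with a positive $b\in B(G)xB(G)\subseteq B(G)$, using that $B(G)$ properly infinite and s-unital is algebraically purely infinite \cite[Lemma~3.4(i)]{AGPS} to extract an exact factorisation $b=(c1_W)x(1_Wd)$, and then applying \cite[Proposition~2.4]{R92}. Either way, the containment $B(G)xB(G)\subseteq B(G)$ (equivalently, the closedness of the corners) is the engine; without it your approximation argument does not close. (A minor further slip: approximating $a$ by $c$ and replacing $c$ by $c^*c$ lands you near $a^2$, not $a$; approximate $\sqrt{a}$ instead, as the paper does with $\sqrt{y}$.)
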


\begin{proof}
We show that $\cs_r(G)$ is $\cs$-purely infinite by verifying the two conditions  of  Definition~\ref{C* def}\eqref{A pu inf}.  First, we have to show that there are no nonzero $C^*$-homomorphisms from $C^*_r(G)$ to $\C$. 
Aiming for a contradiction, suppose that there is
a nonzero $C^*$-homomorphism $\pi:\cs_r(G) \to \mathbb{C}$. Since $G$ is minimal and effective, $C^*_r(G)$ is simple \cite[Proposition~II.4.6]{Renault}, so $\pi$ is injective, and $C^*_r(G)$ is $C^*$-isomorphic to $\C$. In particular, $A(G)$ is isomorphic to $\C$. But $A(G)$ is algebraically purely infinite, and hence contains an infinite idempotent. But then so does $\C$, a contradiction. Hence  there are no nonzero homomorphisms from $C^*_r(G)$ to $\C$. 

Second, we have to show that for $x,y\in C_r^*(G)^+$, we have $y \cbc x$ if and only if $y\in \overline{C_r^*(G)x C_r^*(G)}$. Since $C_r^*(G)$ is simple, $\overline{C_r^*(G)x C_r^*(G)}=C_r^*(G)$; so
it suffices to show that $y \cbc x$ for all $x, y\in C_r^*(G)^+$. Fix  $x, y\in C_r^*(G)^+$.
Since $B(G)$ is dense in $C^*_r(G)$, we have  \[\overline{B(G)xB(G)} = \overline{\cs_r(G)x\cs_r(G)} = \cs_r(G).\]

 Fix $\epsilon > 0$. 
 Since $y$ is a positive element, so is $\sqrt{y}$. Since  $\sqrt{y}\in \overline{B(G)xB(G)}$, there exists a sequence $b_n\in B(G)xB(G)$ such that $b_n\to \sqrt{y}$.  Since $B(G)B(G)xB(G)\subseteq B(G)$ we have 
\[
b_nb_n^*\in B(G)xB(G)B(G)xB(G)\subseteq  B(G)xB(G)
\]
 and $b_nb_n^*\to y$. 
Thus there exists a positive $b\in B(G)xB(G)$ such that $\|b-y\|<\epsilon$.   Now \cite[Lemma~2.5(ii)]{KR} gives $(y-\epsilon)_+\cbc b$.
Since $\epsilon$ was fixed,  \cite[Proposition~2.6]{KR} gives
$y \cbc b$.  

We claim that $b\cbc x$. By the definition of $B(G)xB(G)$, there exist $c_i, d_i\in B(G)$ such that $b=\sum_{i=1}^k c_ixd_i$, and hence there exists a compact open subset $W$ of $\go$ such that $b=\sum_{i=1}^k 1_Wc_i1_Wx1_Wd_i1_W$. Thus $b\in B(G)1_Wx1_WB(G)$. 
Since $B(G)$ is s-unital (by Lemma~\ref{lem:B}) and is algebraically properly infinite (by Proposition~\ref{prop:Binf}), it follows from \cite[Lemma~3.4(i)]{AGPS} that $B(G)$ is algebraically purely infinite. Thus by Definition~\ref{defn ring} \eqref{b}, $b\in B(G)\cap B(G)1_Wx1_WB(G)$ implies $b\cba 1_Wx1_W$ in $B(G)$, that is, there exist $c, d\in B(G)$ such that \[b=c(1_Wx1_W)d=(c1_W)x(1_Wd).\] Now applying \cite[Proposition~2.4 (iii)$\Rightarrow$(ii)]{R92} to the positive elements $b, x$ of $C^*_r(G)$ and the constant sequences $c1_W, 1_Wd$, gives a sequence $\{r_n\}\subseteq C^*_r(G)$ such that $r_nxr_n^*\to b$. Thus $b\cbc x$. Since  $\cbc$ is transitive, we get $y\cbc x$. 
\end{proof}

\begin{lemma}
\label{lem:propinf}
If $1_U$ is a $\cs$-properly infinite projection in $\cs_r(G)$,  then $1_U$ is an algebraically properly infinite idempotent in $B(G)$.
\end{lemma}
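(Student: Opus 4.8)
The plan is to transfer the properly infinite structure of the projection $1_U$ from $C^*_r(G)$ down to $B(G)$ by finding witnessing elements inside $B(G)$. Since $1_U$ is a properly infinite projection in $C^*_r(G)$, by the first remark following Definition~\ref{C* def} it is $C^*$-properly infinite, and equivalently there exist mutually orthogonal projections $q_1, q_2 \in C^*_r(G)$ with $q_i \leq_{\cs} 1_U$ and $q_i \sim_{\cs} 1_U$ for $i = 1,2$. For each $i$ pick a partial isometry $t_i \in C^*_r(G)$ with $t_i t_i^* = 1_U$ and $t_i^* t_i = q_i$. The key observation is that because $q_i \leq_{\cs} 1_U$ we have $1_U q_i = q_i = q_i 1_U$, and because $t_i = 1_U t_i q_i$ we get $t_i = 1_U t_i 1_U \in B(G)$; similarly $q_i = 1_U q_i 1_U \in B(G)$. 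So all the witnessing elements already live in $B(G)$.

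Next I would verify that these elements witness proper infiniteness in the \emph{algebraic} sense inside $B(G)$, i.e. that there is an idempotent $e \leqa 1_U$ with $1_U \oplus 1_U \sima e$ in $B(G)$. Set $e = q_1 + q_2$; since $q_1, q_2$ are orthogonal projections dominated by $1_U$, $e$ is an idempotent with $1_U e = e = e 1_U$, so $e \leqa 1_U$. For the Murray--von Neumann type equivalence $1_U \oplus 1_U \sima e$, I would exhibit $2 \times 1$ and $1 \times 2$ matrices over $B(G)$: let $s = \begin{pmatrix} t_1 \\ t_2 \end{pmatrix} \in M_{1,2}(B(G))$ transposed appropriately, and its adjoint $s^* = \begin{pmatrix} t_1^* & t_2^* \end{pmatrix}$, so that $s^* s = \operatorname{diag}(t_1^* t_1, t_2^* t_2) = \operatorname{diag}(q_1, q_2) = 1_U \oplus 1_U$ using orthogonality of the $q_i$ to kill the off-diagonal terms $t_1^* t_2$ and $t_2^* t_1$ (these vanish because $t_1^* t_2 = t_1^* t_1 t_1^* t_2 t_2^* t_2 = q_1 (\cdots) q_2$ and $q_1 q_2 = 0$ — one needs $t_i = 1_U t_i = t_i q_i$ to push this through), while $s s^* = t_1 t_1^* + t_2 t_2^*$; this is not quite $e$, so instead one should arrange the $t_i$ to also satisfy $t_i^* t_i = q_i$ and $t_i t_i^* \leq 1_U$ with $\sum t_i t_i^* = e' $ — actually the cleaner route is to use the characterisation: $1_U$ properly infinite means $1_U \oplus 1_U \precsim_{\cs} 1_U$, pull back the witnessing sequence, and then use that algebraic and $C^*$ proper infiniteness of an idempotent/projection coincide once it is already known to be a projection, via the second remark after Definition~\ref{defn ring} together with Lemma~\ref{lem:referee}-style arguments.

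Actually the most economical argument I would write is: $1_U$ is a properly infinite projection in $C^*_r(G)$, hence $C^*$-properly infinite, hence (by the remark after Definition~\ref{C* def}) there exist partial isometries $v_1, v_2 \in C^*_r(G)$ with $v_i^* v_i = 1_U$, $v_i v_i^* \leq 1_U$ and $v_1 v_1^* \perp v_2 v_2^*$. Each $v_i$ lies in $1_U C^*_r(G) 1_U \subseteq B(G)$ since $v_i = v_i v_i^* v_i = 1_U v_i 1_U$. Now in $B(G)$ set $e_i = v_i v_i^*$ (orthogonal idempotents below $1_U$) and check $v_i^* v_i = 1_U$, $v_i v_i^* = e_i \leqa 1_U$, which says $1_U \sima e_i$ in $B(G)$ for each $i$; together with $e_1 \perp e_2$ and $e_1 + e_2 \leqa 1_U$ this is exactly the condition in Definition~\ref{defn ring}(7) that $1_U$ is a properly infinite idempotent in $B(G)$ (with $q = e_1 + e_2$, $1_U \oplus 1_U \sima e_1 \oplus e_2 \sima q$ via the block partial isometry, the off-diagonal terms vanishing by orthogonality).

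The main obstacle I anticipate is the bookkeeping needed to show the partial isometries and projections actually land in $B(G)$ with a \emph{single} common compact open unit $U$ — this is where the specific structure of $B(G)$ as built from corners $1_U C^*_r(G) 1_U$ is essential, and one must be careful that $e_1, e_2, 1_U$ and the $v_i$ are all compressed by the \emph{same} $1_U$ so that the matrix manipulations stay inside $M_n(B(G))$; fortunately this is automatic here since everything is dominated by or equal to the fixed $1_U$. A secondary point to get right is confirming that the $C^*$-theoretic proper infiniteness of the projection $1_U$ really does hand us orthogonal \emph{subprojections} equivalent to $1_U$ with partial isometries, rather than merely a Cuntz subequivalence — but this is exactly the content of the standard fact (cited in the paper via \cite{KR}) that a properly infinite projection admits such a copy of two orthogonal equivalent subprojections.
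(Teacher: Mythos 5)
Your final ``economical'' version is exactly the paper's proof: the paper likewise extracts partial isometries implementing $1_U \sim_{\cs} q_1$, $1_U\sim_{\cs} q_2$ with $q_1\perp q_2$ both below $1_U$, notes that compressing by $1_U$ leaves them unchanged and places them in $B(G)$, and then uses the $1\times 2$ block matrix to get $1_U\oplus 1_U \sima q_1+q_2 \leqa 1_U$. The meandering in your middle paragraph is harmless since you resolve it correctly, so the proposal is correct and takes essentially the same route as the paper.
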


\begin{proof}
Let $p := 1_U$ be an algebraically properly infinite projection in $\cs_r(G)$.  By the definition of algebraically properly infinite projection, there exist $q_1,q_2, x,y \in\cs_r(G)$ such that 
\[x^*x = p, \quad xx^* = q_1 \leq_{\cs} p, \quad y^*y = p, \quad \text{ and }\quad yy^*=q_2 \leq_{\cs} p-q_1.\]
Thus $x$ and $y$ are partial isometries in $\cs_r(G)$ with range projections $q_1$ and $q_2$, respectively, and source projection $p$.  Thus $xp =x$ and $q_1x=x$, and similarly for $y$.  Take $x':=pxp$ and $y' := pyp$.  Then $x',y' \in B(G)$.  
Now let $z$ be the 1 by 2 matrix $[x'\  y']$.  It is straightforward to check that
\[z^*z = p \oplus p \quad \text{and} \quad zz^* = q_1+q_2 \leq_{\cs} p. \]
Thus $p$ is an algebraically properly infinite idempotent in $B(G)$.
\end{proof}

\begin{lemma}
\label{lem:cbccba}
Let $G$ be a minimal and effective groupoid.  Suppose that $U,V \subseteq \go$ are nonempty compact open sets such that $1_U$ is an algebraically properly infinite idempotent in $B(G)$.  Then $1_V \cba 1_U$ in $B(G)$.
\end{lemma}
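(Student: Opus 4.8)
The plan is to use minimality and effectiveness to move the projection $1_V$ inside a corner controlled by $1_U$, and then exploit that $1_U$ is properly infinite (hence algebraically infinite) in the s-unital algebra $B(G)$. First I would observe that since $G$ is minimal, the saturation of any nonempty open subset of $\go$ is all of $\go$; concretely, for the compact open set $V$ there is a compact open bisection $B$ with $s(B) \subseteq V$ and $r(B) \subseteq U$. (This is the standard consequence of minimality for ample groupoids: the orbit of any point in $U$ is dense, so one can cover $V$ — or at least a nonempty piece of it — by sources of bisections with range in $U$; a compactness argument then lets us take a single bisection after shrinking. I would need to be a little careful about whether we can cover all of $V$ or only part of it, but since we only need $1_V \cba 1_U$, and $\cba$ behaves well under the block-diagonal decomposition of $V$ into finitely many pieces, covering $V$ by finitely many such bisections suffices.)

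Next, for such a bisection $B$ I have $1_{B^*} \in A(G) \subseteq B(G)$, $1_{B^*} 1_B = 1_{s(B)} = 1_{s(B)}$ and $1_B 1_{B^*} = 1_{r(B)} \leqa 1_U$. So the projection $1_{s(B)}$ is $\sima$-equivalent (in $B(G)$) to a subprojection of $1_U$, giving $1_{s(B)} \cba 1_U$ in $B(G)$ via the witnesses $\alpha = 1_{B^*}$, $\beta = 1_B$ and $1_{r(B)} \leqa 1_U$ (using Remark~\ref{rmk resolve ring}\eqref{it1:rmkresolve} and transitivity of $\cba$). After decomposing $V$ into finitely many disjoint compact open sets each of the form $s(B_i)$, I get $1_V = \bigoplus_i 1_{s(B_i)} \cba \bigoplus_i 1_U = 1_U^{\oplus n}$ in $B(G)$ (in matrix form over $B(G)$), where $n$ is the number of pieces.

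Finally I would collapse the direct sum using proper infiniteness of $1_U$. Since $B(G)$ is s-unital (Lemma~\ref{lem:B}) and $1_U$ is a properly infinite idempotent, $1_U \oplus 1_U \cba 1_U$ in $B(G)$; iterating, $1_U^{\oplus n} \cba 1_U$ in $B(G)$ for every $n \geq 1$. Combining with the previous paragraph and transitivity of $\cba$ yields $1_V \cba 1_U$ in $B(G)$, as required.

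The main obstacle I anticipate is the first step: extracting from minimality a genuinely \emph{compact} open bisection (or finitely many of them) whose sources cover $V$ and whose ranges sit inside $U$. One has to combine the density of orbits with the existence of a basis of compact open bisections and a compactness argument on $V$, and keep track of disjointness so that the decomposition $1_V = \bigoplus 1_{s(B_i)}$ is clean. Once that covering is in hand, the rest is a routine manipulation of $\cba$ together with the defining property of a properly infinite idempotent.
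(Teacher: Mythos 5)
Your argument is correct, but it takes a genuinely different route from the paper. The paper's proof never touches the groupoid directly: it uses that minimality and effectiveness make $A(G)$ algebraically simple, so the two-sided ideal generated by $1_U$ in $A(G)$ is everything, whence $1_V=\sum_{i=1}^n a_i 1_U b_i$ for some $a_i,b_i\in A(G)$; it then applies \cite[Lemma~2.2(vi) and (ii)]{AGPS} to get $1_V\cba\bigoplus_{i=1}^n a_i1_Ub_i\cba\bigoplus_{i=1}^n 1_U$, and collapses the direct sum using proper infiniteness of $1_U$, exactly as you do in your last step. You instead replace the simplicity step by a dynamical one: density of orbits gives, for each $x\in V$, a compact open bisection with $x\in s(B)\subseteq V$ and $r(B)\subseteq U$; compactness and disjointification give finitely many $B_i$ with $V=\bigsqcup_i s(B_i)$, and then $1_{s(B_i)}\sima 1_{r(B_i)}\leqa 1_U$ yields $1_{s(B_i)}\cba 1_U$. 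This covering argument does go through (shrink a basic compact open bisection through $\gamma$ by intersecting with $s^{-1}(V)\cap r^{-1}(U)$, then restrict the $B_i$ over the disjointified sources), and the passage from the orthogonal sum $1_V=\sum_i 1_{s(B_i)}$ to the matrix direct sum is the same \cite[Lemma~2.2(vi)]{AGPS} step the paper uses, so your worry in the final paragraph is resolvable. What your route buys is that it only uses minimality of $G$ (effectiveness enters the paper's proof solely through simplicity of $A(G)$), and it produces an explicit Murray--von Neumann comparison rather than an abstract ideal decomposition; what the paper's route buys is brevity, since simplicity of $A(G)$ is already on record and no covering or disjointification bookkeeping is needed.
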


\begin{proof}                                                
Since $G$ is minimal and effective,  $A(G)$ is simple. Thus the ideal generated by $1_U$ in $A(G)$ is all of $A(G)$. 
Therefore there exist $a_i, b_i\in A(G)$ such that
\begin{align*}
1_V &= \sum_{i=1}^n a_i1_Ub_i\\
&\cba \bigoplus_{i=1}^n a_i1_Ub_i \text{ by \cite[Lemma~2.2(vi)]{AGPS} because $A(G)$ is s-unital }\\
&\cba \bigoplus_{i=1}^n 1_U \text{ by \cite[Lemma~2.2(ii)]{AGPS}}\\
&\cba 1_U \text{ in $B(G)$}
\end{align*} because $1_U$ is algebraically properly infinite in $B(G)$.
Finally, since $\cba$ is transitive by \cite[Lemma~2.2(i)]{AGPS} we have $1_V   \cba 1_U$ in  $B(G)$.      
\end{proof}

\begin{prop}
\label{prop:Binf}
 Let $G$ be a second-countable ample Hausdorff groupoid. Suppose that $G$ is minimal and effective, and that $\cs_r(G)$ is $\cs$-purely infinite.
Then the algebra \[B(G)= \{1_U x 1_U : x \in \cs_r(G) \text{ and } U \subseteq \go \text{ is a compact open set}\}\]  of Lemma~\ref{lem:B} is algebraically properly infinite.
\end{prop}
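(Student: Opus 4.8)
The plan is to show that every nonzero element $b \in B(G)$ satisfies $b \oplus b \cba b$ in $B(G)$. The key reduction is that it suffices to handle the case $b = 1_U x 1_U$ with $x \in \cs_r(G)^+$: a general nonzero $b \in B(G)$ can be compared, using the positive element $b^*b$ and the machinery of $\cba$ (together with s-unitality of $B(G)$ from Lemma~\ref{lem:B}), to a positive element of $B(G)$, and conversely $b^*b \cba b$ by standard $\cba$-manipulations, so it is enough to prove proper infiniteness for positive elements of $B(G)$, each of which lies in some corner $1_U \cs_r(G) 1_U = 1_U \cs_r(G)^+ 1_U + \cdots$. Actually the cleanest route is: given nonzero positive $b = 1_U x 1_U \in B(G)$ with $U$ compact open, since $\cs_r(G)$ is purely infinite and simple, $b$ is $\cs$-properly infinite, hence (Remarks after Definition~\ref{C* def}) there is a projection, and more usefully $\overline{\cs_r(G) b \cs_r(G)} = \cs_r(G) \ni 1_U$, so $1_U \cbc b$ in $\cs_r(G)$.

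First I would establish that $1_U$ is a \emph{properly infinite projection} in $\cs_r(G)$: since $\cs_r(G)$ is purely infinite and simple, every nonzero projection is properly infinite by \cite[Proposition~4.7, Theorem~4.16]{KR} (a nonzero projection is a nonzero positive element, hence $\cs$-properly infinite, and a $\cs$-properly infinite projection is a properly infinite projection by the Remarks following Definition~\ref{C* def}). Then Lemma~\ref{lem:propinf} gives that $1_U$ is a properly infinite idempotent in $B(G)$ for every compact open $U \subseteq \go$.

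Next, fix a nonzero $b \in B(G)$; choose a compact open $U$ with $b = 1_U b 1_U$. Since $\cs_r(G)$ is simple, $b$ generates it as an ideal, so $1_U \in \overline{\cs_r(G) b \cs_r(G)}$, and applying the argument in the proof of Proposition~\ref{prop:1} (pass to $b^*b$, approximate, use \cite[Lemma~2.5(ii), Proposition~2.6]{KR}) one gets that $1_U \cbc b^* b$ in $\cs_r(G)$. But $b^* b = 1_U b^* b 1_U \in B(G)$, so one can run the $B(G)$-comparison from the proof of Proposition~\ref{prop:1}: $B(G)$ is s-unital (Lemma~\ref{lem:B}) and, once we have the conclusion, algebraically purely infinite, giving an \emph{algebraic} comparison $1_U \cba b^*b$ in $B(G)$ via \cite[Proposition~2.4]{R92} / \cite[Lemma~3.4(i)]{AGPS}. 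The cleaner self-contained route, avoiding circularity, is: by Lemma~\ref{lem:cbccba}, $1_V \cba 1_U$ in $B(G)$ for all nonempty compact open $V$; and $b \cba 1_U$ in $B(G)$ because $b \in B(G) = B(G) 1_U B(G)$-corner and $1_U$ is algebraically properly infinite, so the ideal $B(G) b B(G)$ relation... here I would instead use: since $A(G)$ is simple, $1_V = \sum a_i b b_i$ for suitable $a_i, b_i \in A(G)$ after noting $b$ is in the ideal it generates, giving $1_V \cba b$ in $B(G)$ by the same $\bigoplus$-collapsing computation as in Lemma~\ref{lem:cbccba} (using that $b$ lies in a properly infinite corner). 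Then chaining $b \cba 1_U$ and $1_U \oplus 1_U \cba 1_U$ (proper infiniteness of $1_U$) with $1_U \cba b$...

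Let me state the final assembly plainly: by Lemma~\ref{lem:cbccba} applied symmetrically (and the simplicity of $A(G)$ used to write $b$ in terms of $1_U$ and vice versa), one shows $b \sima$-comparable both ways to $1_U$ in the $\cba$ preorder, i.e. $b \cba 1_U$ and $1_U \cba b$ in $B(G)$; then
\[
b \oplus b \cba 1_U \oplus 1_U \cba 1_U \cba b
\]
in $B(G)$, where the middle step is the proper infiniteness of the idempotent $1_U$ in $B(G)$ and the outer steps are monotonicity of $\oplus$ under $\cba$ together with transitivity \cite[Lemma~2.2]{AGPS}. Hence $b$ is algebraically properly infinite, and since $b \in B(G)$ was an arbitrary nonzero element, $B(G)$ is algebraically properly infinite.

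The main obstacle is the step $1_U \cba b$ in $B(G)$, i.e. passing from the $\cs$-algebraic comparison $1_U \cbc b^*b$ (available from pure infiniteness and simplicity of $\cs_r(G)$) to a purely \emph{algebraic} comparison inside $B(G)$. This requires the key structural property that $B(G) x B(G) \subseteq B(G)$ for every $x \in \cs_r(G)$, combined with the trick from the proof of Proposition~\ref{prop:1}: approximate a square root, absorb the approximation using \cite[Lemma~2.5(ii)]{KR} and \cite[Proposition~2.6]{KR}, land in a corner $B(G) 1_W x' 1_W B(G)$, and convert the norm-limit comparison to an exact algebraic factorisation via \cite[Proposition~2.4]{R92}. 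One must be careful to avoid circular use of "algebraically purely infinite'' for $B(G)$ before it is established; the way to do this is to route everything through the proper infiniteness of the \emph{idempotents} $1_U$ (Lemma~\ref{lem:propinf}) and Lemma~\ref{lem:cbccba}, which only assume $1_U$ is properly infinite, not the full conclusion.
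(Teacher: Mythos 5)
Your overall architecture matches the paper's: establish that each $1_U$ is a properly infinite projection in $\cs_r(G)$ via \cite[Theorem~4.16]{KR}, transfer this to $B(G)$ via Lemma~\ref{lem:propinf}, use Lemma~\ref{lem:cbccba} to compare characteristic functions, and assemble $b\oplus b\cba 1_U\oplus 1_U\cba 1_U\cba b$. That final assembly is fine. But the crux --- which you correctly identify as the step $1_U\cba b^*b$ \emph{inside} $B(G)$ --- is not actually established by any of the three routes you sketch. The first is circular, as you concede. The second, ``since $A(G)$ is simple, $1_V=\sum a_i b b_i$ for suitable $a_i,b_i\in A(G)$,'' fails because $b$ is a general element of $B(G)$, not of $A(G)$: simplicity of $A(G)$ gives an exact finite expression $1_V=\sum a_i c b_i$ only for $c\in A(G)$ (this is exactly the situation of Lemma~\ref{lem:cbccba}, where $c=1_U$), and neither algebraic simplicity of $B(G)$ nor of the Pedersen ideal is available. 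The third route miscites \cite[Proposition~2.4]{R92}: that result converts an \emph{exact} factorisation into a norm-limit comparison (this is the direction used in the proof of Proposition~\ref{prop:1}), not the reverse; for general positive elements $y\cbc x$ does \emph{not} yield $y=\alpha x\beta$ exactly, so ``convert the norm-limit comparison to an exact algebraic factorisation'' is precisely the step that needs an argument and does not follow from the cited result.

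The gap is fillable, and the paper fills it differently: the proof of \cite[Lemma~3.2]{BCS} already produces a nonzero $h\in C_0(\go)^+$ with an \emph{exact} comparison $h\cba a^*a$ in $\cs_r(G)$; one then chooses a compact open $U$ on which $h$ is bounded below and writes $1_U=ghg$ with $g\in C_0(\go)$ explicitly, giving $1_U\cba a^*a$ in $\cs_r(G)$ with exact witnesses $y,z\in\cs_r(G)$. The remaining point --- which your sketch also leaves vague --- is to push those witnesses into $B(G)$: with $W=U\cup V$ one has $1_U=1_W1_U1_W$ and $a^*a=1_Wa^*a1_W$, so $(1_Wz1_W)(a^*a)(1_Wy1_W)=1_U$ with both factors in $B(G)$. (Alternatively, for a projection $p$ one can upgrade $p\cbc a$ to $p=vav^*$ by a polar-decomposition/invertibility argument in the corner $pAp$, but that argument has to be supplied; it is not what \cite[Proposition~2.4]{R92} says.) As written, your proposal records the correct strategy but leaves its only nontrivial step unproved.
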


\begin{proof}
Since $G$ is minimal and effective, $\cs_r(G)$ is $\cs$-purely infinite simple.  Then \cite[Theorem~4.16]{KR} says that every positive element of $\cs_r(G)$ is $\cs$-properly infinite.  In particular, for every compact open $U \subseteq \go$, $1_U$ is a $\cs$-properly infinite projection in $\cs_r(G)$.  So Lemma~\ref{lem:propinf} says every $1_U$ is an algebraically properly infinite idempotent in $B(G)$.

Fix a nonzero $a \in B(G)$.  We need to show that $a$ is algebraically properly infinite, that is, that $a\oplus a\cba a$.   Since $a\in B(G)$, there exist a  compact open $V \subseteq \go$ and
$x \in \cs_r(G)$ such that $a = 1_Vx1_V$.  

We claim that there exists a compact open $U \subseteq \go$ such that 
$1_U \cba a^*a$ in $B(G)$.  Since $a^*a$ is positive, by the proof of  \cite[Lemma~3.2]{BCS} 
there exists a nonzero $h \in C_0(\go)^+$ such that $h \cba a^*a$ in $\cs_r(G)$.  (The statement of  \cite[Lemma~3.2]{BCS} gives $h$ such that $h\cbc a^*a$, but a look at the proof shows  $h\cba a^*a$.)
Since $h\in  C_0(\go)^+ \setminus \{0\}$ there exists a compact  open subset $U$ of $\go$ such that $h(u)>0$ for all $u\in U$. 
Define $g$ by 
\[g(u)=\begin{cases} \frac{1}{\sqrt{h(u)}} &\text{if $u\in U$}\\
0&\text{otherwise.} \end{cases}
\]
Then $g\in C_0(\go)$, and $ghg=1_U$.
Thus $1_U \cba h$ in $\cs_r(G)$.
Now the transitivity of $\cba$ gives $1_U \cba a^*a$ in $\cs_r(G)$.  In particular, there exists
$y,z\in \cs_r(G)$ such that $za^*ay=1_U$.  

Let $W:=U \cup V$. Then
$1_U = 1_W1_U1_W$ and $a^*a = 1_{V}x^*1_{V} 1_Vx1_V=1_Wa^*a1_W$,  and 
\[ 1_Wz1_Wa^*a1_Wy1_W =1_Wza^*ay1_W  =1_W1_U1_W=1_U.\]
Thus  $1_U \cba a^*a$ in $B(G)$, as claimed.

We have $a^*a = a^*a1_V$ and  $a^* = 1_{V}x^*1_{V} \in B(G)$, and hence 
\[a^*a=a^*a1_V=(1_{V}x^*1_{V})a (1_V).\]
Thus $a^*a \cba a$ in $B(G)$. Now   $1_U \cba a$ in $B(G)$
by transitivity of $\cba$.

Since $B(G)$ is s-unital and $1_U$ an algebraically properly infinite idempotent in $B(G)$, we apply \cite[Lemma~3.9(ii)]{AGPS}
to get that 
\[a \oplus 1_U \cba a\]
in $B(G)$.  We have  $a \cba 1_V$ because $a = (1_Vx1_V)1_V(1_V)$ and  Lemma~\ref{lem:cbccba} implies $1_V \cba 1_U$ in $B(G)$.
The transitivity of $\cba$ implies
$a \cba 1_U$ in $B(G)$.  We also have $a \cba a$ because $B(G)$ is s-unital, and so
\[a \oplus a \cba a \oplus 1_U\]
by \cite[Lemma~2.2(ii)]{AGPS}. Finally, transitivity  gives $a \oplus a \cba a$ in $B(G)$.
\end{proof}

Combining Propositions~\ref{prop:1} and \ref{prop:Binf} we get our main theorem.

\begin{thm}
\label{thm:main}
 Let $G$ be a second-countable ample Hausdorff  groupoid. Suppose that $G$ is minimal and effective.  Then  
$B(G)$ is  algebraically properly infinite if and only if  
$\cs_r(G)$ is $\cs$-purely infinite simple.
\end{thm}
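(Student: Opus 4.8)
The plan is to deduce Theorem~\ref{thm:main} directly from the two propositions that precede it, since they were constructed precisely to split the biconditional into its two implications. First I would observe that by Remarks~\ref{rmk resolve ring}, the notions ``algebraically properly infinite'' (every nonzero element is properly infinite) coincide with the hypotheses appearing in the propositions, and that by the $C^*$-algebraic remarks (via \cite[Proposition~4.7]{KR} and \cite[Proposition~5.4]{KR}) the two notions of ``purely infinite'' agree for simple $C^*$-algebras; since $G$ is minimal and effective, $\cs_r(G)$ is simple, so ``$\cs_r(G)$ is $\cs$-purely infinite simple'' is equivalent to ``$\cs_r(G)$ is $\cs$-purely infinite.'' This bookkeeping is the only subtlety, and it is genuinely routine given the earlier remarks.

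For the forward implication, suppose $B(G)$ is algebraically properly infinite. Then Proposition~\ref{prop:1} immediately gives that $\cs_r(G)$ is purely infinite; note that this direction does not even use minimality or effectiveness. Combined with the simplicity of $\cs_r(G)$ (which does use minimality and effectiveness, via \cite[Theorem~5.1]{BCFS}) and the compatibility of the two notions of pure infiniteness for simple $C^*$-algebras, we conclude $\cs_r(G)$ is $\cs$-purely infinite simple.

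For the reverse implication, suppose $\cs_r(G)$ is $\cs$-purely infinite simple. In particular $\cs_r(G)$ is purely infinite, so Proposition~\ref{prop:Binf}, whose hypotheses (second-countable, minimal, effective, $\cs_r(G)$ purely infinite) are exactly what we now have, yields that $B(G)$ is algebraically properly infinite. Putting the two directions together proves the theorem.

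The proof is therefore essentially a two-line citation of Propositions~\ref{prop:1} and~\ref{prop:Binf}. The only place where care is needed — and the step I would flag as the ``main obstacle,'' though it is minor — is ensuring the hypotheses line up: Proposition~\ref{prop:1} is stated for ``purely infinite'' while the theorem says ``purely infinite simple,'' so one must invoke simplicity of $\cs_r(G)$ (from minimality and effectiveness) to pass between them, and one must also make sure the second-countability hypothesis in the theorem is exactly what both propositions require. Here is the proof.

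\begin{proof}
Since $G$ is minimal and effective, $\cs_r(G)$ is simple by \cite[Theorem~5.1]{BCFS}, and $A(G)$ is algebraically simple by \cite[Theorem~4.1]{CE}. For a simple $C^*$-algebra, being $\cs$-purely infinite simple is equivalent to being $\cs$-purely infinite, by \cite[Proposition~4.7]{KR} and \cite[Proposition~5.4]{KR}.

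Suppose first that $B(G)$ is algebraically properly infinite. Then Proposition~\ref{prop:1} gives that $\cs_r(G)$ is purely infinite, and hence, being also simple, that $\cs_r(G)$ is $\cs$-purely infinite simple.

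Conversely, suppose that $\cs_r(G)$ is $\cs$-purely infinite simple. Then $\cs_r(G)$ is purely infinite, so Proposition~\ref{prop:Binf} applies and gives that $B(G)$ is algebraically properly infinite.
\end{proof}
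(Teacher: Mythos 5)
Your proof is correct and takes exactly the same route as the paper, which simply states that the theorem follows by combining Propositions~\ref{prop:1} and~\ref{prop:Binf}. Your extra bookkeeping---using simplicity of $\cs_r(G)$ (from minimality and effectiveness) together with \cite[Proposition~4.7]{KR} and \cite[Proposition~5.4]{KR} to pass between ``purely infinite'' and ``purely infinite simple''---is a detail the paper leaves implicit, and you handle it correctly.
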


Theorem~\ref{thm:main} together with Corollary~\ref{cor:main} give the following corollary.

\begin{cor}
Let $G$ be an ample Hausdorff groupoid.  Suppose that $G$ is effective and minimal.  If $A(G)$ is algebraically purely infinite, then $B(G)$ is algebraically properly infinite.
\end{cor}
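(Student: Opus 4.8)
The plan is to obtain the corollary by feeding Corollary~\ref{cor:main} into the reverse direction of Theorem~\ref{thm:main}; no new ideas are required, only a careful chaining of results already in hand (worked out under the standing second-countability hypothesis that both of those statements invoke).

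First I would apply Corollary~\ref{cor:main}: since $G$ is minimal and effective and $A(G)$ is algebraically purely infinite, that corollary gives at once that $\cs_r(G)$ is $\cs$-purely infinite. The next step is to strengthen this to ``$\cs$-purely infinite simple'', which is the exact hypothesis Theorem~\ref{thm:main} wants. For that, minimality and effectiveness of $G$ force $\cs_r(G)$ to be simple (see \cite[Theorem~5.1]{BCFS}), and then \cite[Proposition~5.4]{KR}, recorded in the remarks following Definition~\ref{C* def}, says a simple purely infinite $\cs$-algebra is $\cs$-purely infinite simple. Hence $\cs_r(G)$ is $\cs$-purely infinite simple.

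Finally, Theorem~\ref{thm:main} (equivalently, its Proposition~\ref{prop:Binf} half) applies verbatim and yields that $B(G)$ is algebraically properly infinite. I do not expect a genuine obstacle here: each step is a direct citation, and the only thing to watch is that the hypotheses of Corollary~\ref{cor:main} and Theorem~\ref{thm:main} are actually met --- in particular, that one bridges from ``$\cs$-purely infinite'' to ``$\cs$-purely infinite simple'' (via simplicity of $\cs_r(G)$ and \cite[Proposition~5.4]{KR}) before invoking Theorem~\ref{thm:main}.
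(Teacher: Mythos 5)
Your proposal is correct and matches the paper exactly: the paper derives this corollary by combining Corollary~\ref{cor:main} with Theorem~\ref{thm:main}, which is precisely your chain of citations. Your extra care in flagging the implicit second-countability hypothesis and in bridging from ``$\cs$-purely infinite'' to ``$\cs$-purely infinite simple'' via simplicity of $\cs_r(G)$ and \cite[Proposition~5.4]{KR} is a faithful (and slightly more explicit) rendering of the same argument.
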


The following remain  open questions:
\begin{questions}
 \begin{enumerate}
\item  Is there a groupoid $G$ such that $A(G)$ is algebraically purely infinite simple and $\cs_r(G)$ is not algebraically purely infinite?  
  \item Is there a groupoid $G$ such that $C^*_r(G)$ is $C^*$-purely infinite simple and $A(G)$ is not algebraically purely infinite simple?  (In other words, does the converse of Corollary~\ref{cor:main} fail?).    
\item  Let $U$ be a compact open subset of $\go$. If $1_U$ is an infinite projection in $\cs_r(G)$, is it an infinite idempotent in $A(G)$? This would be a kind of converse to Lemma~\ref{lem:referee}.
 \end{enumerate}
\end{questions}

\section{Kumjian--Pask algebras}
\label{section:KP}

In this section we show how our results apply to the subclass of algebras associated to higher-rank graphs.
$\cs$-purely infinite simple $\cs$-algebras of higher-rank graphs are considered 
in \cite{PRRS, Sims, EvansSims:JFA2012, CaHS, BCS, PSS}, but, in general, a condition on the graph that 
characterises pure infiniteness has remained elusive.

Kumjian--Pask algebras were defined in \cite{ACaHR} for higher-rank graphs without sources, 
and were then generalised to locally convex graphs in \cite{CFaH} and to finitely aligned graphs in \cite{CP}.  The groupoids  built from $k$-graphs are described, for example,  in \cite{KP, FMY, Y07, aHKR}.  To make our exposition as self contained as possible we will include some of this construction below.

We view $\N^k$ as a category with one object and composition given by addition.  A  \emph{higher-rank graph of rank $k$} or \emph{$k$-graph} is a countable category $\Lambda$ with a functor $d: \Lambda\to \N^k$, called the \emph{degree map},  
 that satisfies a {\em unique factorisation property}: 
 \begin{itemize}
 \item[] if $\lambda\in \Lambda$ with 
$d(\lambda)=m+n$, then there exist paths $\mu,\nu\in \Lambda$ such that $d(\mu)=m, d(\nu)=n$ and $\lambda=\mu\nu$.  
\end{itemize}
We call the domain and codomain of a morphism $\lambda$ the source and range of $\lambda$, respectively,  and write $s(\lambda)$ and $r(\lambda)$ for them. We call the  objects and morphisms in 
$\Lambda$ vertices and paths respectively; we denote the set of vertices by $\Lambda^0$ and the set of paths by $\Lambda$.  Thus $s, r:\Lambda\to\Lambda^0$.
For $v\in \Lambda^0$ and $n\in \N^k$, we set $v\Lambda:=r\inv(v)$,  $\Lambda^n:=d\inv(n)$ and $v\Lambda^n:=v\Lambda \cap \Lambda^n$. 

For $m=(m_1,\ldots, m_k)$ and $n=(n_1,\ldots, n_k)\in \N^k$, we define  
\begin{align*}                                              
m\wedge n &:=(\min\{m_1,n_1\}, \ldots, \min\{m_k,n_k\} ) \text{ and }\\
m \vee n &:= (\max\{m_1,n_1\}, \ldots, \max\{m_k,n_k\} ) .                  
\end{align*}
Let $\lambda,\mu \in \Lambda$. If $d(\lambda)=m+n$, we write $\lambda(0,m)$ and $\lambda(m,m+n)$ for the unique paths of degree $m$ and $n$ such that $\lambda= \lambda(0,m)\lambda(m,m+n)$. Then $
\tau\in\Lambda$ is a \emph{minimal common extension} of $\lambda$ and $\mu$ if
\begin{equation*}
d\left( \tau \right) =d\left( \lambda \right) \vee d\left( \mu \right), \ \tau \left( 0,d\left( \lambda \right) \right) =\lambda \text{ and }\tau
\left( 0,d\left( \mu \right) \right) =\mu .
\end{equation*}
We define
\begin{equation*}
\Lambda ^{\min }\left( \lambda ,\mu \right) :=\left\{ \left( \rho ,\tau
\right) \in \Lambda \times \Lambda :\lambda \rho =\mu \tau \text{\ is a minimal common extension of $\lambda,\mu$}%
 \right\} .
\end{equation*}
Then $\Lambda $ is \emph{finitely aligned} if $\Lambda ^{\min }\left(
\lambda ,\mu \right) $ is finite (possibly empty) for all $\lambda ,\mu \in
\Lambda $.  This class of $k$-graphs was introduced  
in \cite{RSY} and is the most general type of $k$-graph considered in the literature.

A set $E\subseteq v\Lambda $ is \emph{exhaustive} if for every $\lambda \in
v\Lambda $, there exists $\mu \in E$ such that $\Lambda ^{\min }\left(
\lambda ,\mu \right) \neq \emptyset $. The set of finite exhaustive sets is then
\begin{equation*}
\operatorname{FE}\left( \Lambda \right) =\bigcup_{v\in \Lambda ^{0}}\left\{
E\subseteq \left. v\Lambda \right\backslash \left\{ v\right\} :E\text{ is
finite and exhaustive}\right\}.
\end{equation*}

The unit space of the groupoid of a finitely aligned higher-rank graph 
consists of the set of boundary paths, defined as follows.
First, for $k\in \N$ and $m\in (\N\cup \{\infty\})^k$, let $\Omega_{k,m}$ be the category 
with objects $\{p\in \N^k: p\leq m\}$, morphisms 
\[\{(p,q): p,q\in \N^k, p\leq q\leq m\},\]
$r(p,q)=p$ and $s(p,q)=q$.  With degree functor $d(p,q)=q-p$, $\Omega_{k,m}$ is a $k$-graph.
A degree-preserving functor $x:\Omega _{k,m}\rightarrow \Lambda $ is a \emph{boundary path} of 
$\Lambda $ if for all $n\in \mathbb{N}^{k}$ with $n\leq m$ and all $E\in
x\left( n,n\right) \operatorname{FE}\left( \Lambda \right)$, there exists $\lambda
\in E$ such that $x\left( n,n+d\left( \lambda \right) \right) =\lambda $. We
write $\partial \Lambda $ for the set of all boundary paths.

For $p\in\mathbb{N}^k$ and $\lambda\in \Lambda^{p}$ we define $\sigma^p(\lambda)=\lambda(p, d(\lambda))$. As a set, the  groupoid of a finitely aligned higher-rank graph  is
\begin{align*}
 G_{\Lambda} :=\{&\left( x,m,y\right) \in \partial \Lambda \times \mathbb{Z}^{k}\times
\partial \Lambda :\text{there exists }p,q\in \mathbb{N}^{k}\text{ such that }
p\leq d\left( x\right) ,q\leq d\left(
y\right),\\
&p-q=m\text{ and }\sigma ^{p}(x)=\sigma ^{q}(y)\}\text{.}
\end{align*}
The range of $(x,m, y) \in G_{\Lambda}$ is $x$ and its source is $y$.  Composition is given by \[(x,m,y)(y, n,z)=(x,m+n,z),\] and a 
computation shows that this is well-defined. We identify $\partial \Lambda$  with $\go_\Lambda$ by $x\mapsto (x,0,x)$.

When endowed 
with the topology described in \cite{Y07}, $G_{\Lambda}$ is a second-countable, Hausdorff and ample groupoid.   
For  $\lambda \in \Lambda$ and finite subsets $F$ of $s(\lambda) \Lambda$, define
\begin{gather*}
Z(\lambda) := \{x \in \partial \Lambda: x(0,d(\lambda)) = \lambda\}\quad\text{and}\quad
Z(  \lambda \backslash F) :=
Z( \lambda ) \backslash \Big(\bigcup_{\nu \in
F}Z( \lambda \nu) \Big).
\end{gather*}
Then the unit space $\go_\Lambda$ has a basis of compact open sets of the form
$Z(  \lambda \backslash F)$.
For each $\lambda, \mu \in \Lambda$ with $s(\lambda) =s(\mu)$, 
define 
\[
Z( \lambda,\mu) :=\{( \lambda x,d(\lambda) -d(\mu) ,\mu x):
x \in \partial\Lambda \text{ and }s(\lambda) = s(\mu) = r(x) \}.
\]
Then the basis for the topology on $G_{\Lambda}$ consists of sets of the form 
\[
Z(\lambda, \mu \setminus F )
:=Z( \lambda,\mu ) \setminus
\Big(\bigcup_{\nu \in F}Z( \lambda \nu ,\mu \nu) \Big),
\] where again $F$ is a finite subset of $s(\lambda) \Lambda$. The subspace topology of $\go_\Lambda$  is the one described above.

Given a field $K$, we define the Kumjian--Pask algebra and the $\cs$-algebra of $\Lambda$  by 
\[\KP_K(\Lambda):=A_K(G_\Lambda) \quad\text{and}\quad \cs(\Lambda):=\cs_r(G_{\Lambda}),\]
respectively.
These definitions are consistent with previous definitions in the 
literature via the isomorphism characterised by $s_\mu\mapsto 1_{Z(\mu,s(\mu))}$ 
(\cite[Proposition~5.4]{CP} for $\KP_K(\Lambda)$ and \cite[Theorem~6.9]{FMY} for $C^*(\Lambda)$).   
To be consistent with the literature we write  
$s_v:=1_{Z(v,v)}$ and $s_\mu:=1_{Z(\mu, s(\mu))}$.  
We write $\KP(\Lambda)$ for $\KP_\C(\Lambda)$.

A $k$-graph $\Lambda$ is \emph{cofinal} if for all $v\in \Lambda ^{0}$ and $x\in \partial \Lambda $, 
there exists $n\leq d\left( x\right) $ such that $v\Lambda x(n)$ is nonempty.  
A $k$-graph is {\em aperiodic} if for all $v\in \Lambda^{0}$ there exists
$x\in v \partial \Lambda$ such that $\alpha x=\beta x$ implies $\alpha=\beta$. By \cite[Proposition~7.1]{CP} $G_\Lambda$ is minimal if and only 
if $\Lambda$ is cofinal, and by \cite[Proposition~6.3]{CP} $G_\Lambda$ is effective
if and only if $\Lambda$ is aperiodic.

\begin{cor}
\label{cor:KPPIS}
 Let $\Lambda$ be a finitely aligned higher-rank graph. Suppose that $\Lambda$ is cofinal and aperiodic.
 Then $\KP(\Lambda)$ is algebraically purely infinite simple if and only if $s_v$ is an infinite
 idempotent for all $v \in \Lambda^0$.
\end{cor}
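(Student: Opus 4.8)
The plan is to deduce this corollary from Theorem~\ref{thm:pis} by translating the groupoid hypotheses on $G_\Lambda$ into the stated graph hypotheses on $\Lambda$, and by identifying the relevant characteristic functions $1_U$ with the vertex idempotents $s_v$. First I would invoke the dictionary recorded just above the statement: by \cite[Proposition~7.1]{CP}, $G_\Lambda$ is minimal if and only if $\Lambda$ is cofinal, and by \cite[Proposition~6.3]{CP}, $G_\Lambda$ is effective if and only if $\Lambda$ is aperiodic; also $G_\Lambda$ is a second-countable ample Hausdorff groupoid. So the hypothesis that $\Lambda$ is cofinal and aperiodic is exactly the hypothesis that $G_\Lambda$ is minimal and effective, and $\KP(\Lambda) = A(G_\Lambda)$ by definition. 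Hence Theorem~\ref{thm:pis} (applied with $K = \C$ and $G = G_\Lambda$) tells us that $\KP(\Lambda)$ is algebraically purely infinite simple if and only if $1_U$ is an infinite idempotent for every compact open $U \subseteq \go_\Lambda$.

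Next I would use Corollary~\ref{cor:basis} to cut the family of $U$ that must be checked down to a convenient basis. The sets $Z(\lambda \setminus F)$ for $\lambda \in \Lambda$ and $F$ a finite subset of $s(\lambda)\Lambda$ form a basis of compact open sets for $\go_\Lambda$, so by Corollary~\ref{cor:basis} it suffices to show that $1_{Z(\lambda\setminus F)}$ is an infinite idempotent for every such $\lambda, F$ if and only if $s_v$ is an infinite idempotent for every $v \in \Lambda^0$. One direction is immediate: each $s_v = 1_{Z(v,v)} = 1_{Z(v)}$ is of the form $1_{Z(\lambda \setminus F)}$ with $\lambda = v$ and $F = \emptyset$, so if all $1_{Z(\lambda\setminus F)}$ are infinite idempotents then so are all the $s_v$. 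For the other direction I would argue that $1_{Z(\lambda\setminus F)} \sima s_{r(\lambda)}$ in $\KP(\Lambda)$, or at least that $1_{Z(\lambda\setminus F)}$ dominates (in the $\leqa$ sense) a translate of some $s_v$, and then apply Lemma~\ref{lem:3.9}. Concretely, $1_{Z(\lambda)}$ is Murray--von Neumann equivalent to $1_{Z(r(\lambda),r(\lambda))} = s_{r(\lambda)}$ via the partial isometry $s_\lambda = 1_{Z(\lambda, s(\lambda))}$: we have $s_\lambda^* s_\lambda = s_{s(\lambda)}$ and $s_\lambda s_\lambda^* = 1_{Z(\lambda)}$, so if I can produce a compact open subset $U \subseteq Z(\lambda \setminus F)$ with $1_U \leqa 1_{Z(\lambda\setminus F)}$ and $1_U \sima s_v$ for some $v$, then Lemma~\ref{lem:3.9}\eqref{it3:lem3.9} and \eqref{it2:lem3.9} finish the job.

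The step I expect to be the main obstacle is producing, inside a general basic set $Z(\lambda \setminus F)$, a nonempty compact open subset of the form $Z(\mu)$ (equivalently, a "cylinder" based at a path), so that its characteristic function is equivalent to a vertex idempotent $s_{r(\mu)} = s_{r(\lambda)}$. When $\Lambda$ has no sources this is trivial since one may extend $\lambda$ past $F$; for finitely aligned $\Lambda$ one must be a little careful that the relevant cylinder is nonempty and that boundary-path considerations do not cause $Z(\lambda\setminus F)$ to be empty in a way that matters — but $1_U$ for $U = \emptyset$ is the zero idempotent, which is trivially not an obstruction, so the real content is the nonempty case. I would handle this either by a direct appeal to the structure of $\partial\Lambda$ (every nonempty $Z(\lambda \setminus F)$ contains a boundary path, hence meets some $Z(\mu)$ with $\mu \in \lambda\Lambda$, $\mu \notin \bigcup_{\nu \in F}\lambda\nu\Lambda$), or by citing the analysis of basic open sets in \cite{CP}. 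Once such a $\mu$ is found, $Z(\mu) \subseteq Z(\lambda\setminus F)$ gives $1_{Z(\mu)} \leqa 1_{Z(\lambda\setminus F)}$, and $1_{Z(\mu)} \sima s_{r(\mu)}$ as above; since $s_{r(\mu)}$ is an infinite idempotent by hypothesis, Lemma~\ref{lem:3.9} makes $1_{Z(\mu)}$ and then $1_{Z(\lambda\setminus F)}$ infinite idempotents, and Corollary~\ref{cor:basis} completes the proof.
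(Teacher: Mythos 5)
Your proposal is correct and follows essentially the same route as the paper: the forward direction via Theorem~\ref{thm:pis}, and the converse via Corollary~\ref{cor:basis} applied to the basis $\{Z(\lambda\setminus F)\}$, using $s_\lambda s_\lambda^* = 1_{Z(\lambda)} \sima s_\lambda^* s_\lambda = s_{s(\lambda)}$ together with a nonempty cylinder $Z(\lambda\lambda')\subseteq Z(\lambda\setminus F)$ and Lemma~\ref{lem:3.9}\eqref{it3:lem3.9}--\eqref{it2:lem3.9}. (Minor slip: the partial isometry $s_\lambda$ identifies $1_{Z(\lambda)}$ with $s_{s(\lambda)}$, not $s_{r(\lambda)}$, but this is immaterial since every vertex idempotent is assumed infinite.)
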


\begin{proof}
Suppose that  $\KP(\Lambda)$ is algebraically purely infinite simple.  
Fix $v\in\Lambda^0$. Then $Z(v)$ is a compact open subset of  $\partial \Lambda=\go_{\Lambda}$. Thus $s_v=1_{Z(v)}$ is an infinite idempotent by Theorem~\ref{thm:pis}.

Conversely, suppose that $s_v$ is an infinite idempotent for all $v \in \Lambda^0$. We consider the basis  $\mathcal{B} = \{Z(\lambda\setminus F): \lambda \in \Lambda, \text{ finite } F \subseteq s(\lambda)\Lambda \}$ of compact open sets for $G_{\Lambda}^{(0)}$, and seek to apply Corollary~\ref{cor:basis}.
 First, consider a non-empty $Z(\lambda)$.  Then we have
 \[1_{Z(\lambda)}= s_{\lambda}s_{\lambda^*} \sima s_{\lambda^*}s_{\lambda} = s_{s(\lambda)} .\]
 By assumption, $s_{s(\lambda)}$ is an infinite idempotent.  Since $\KP(\Lambda)$ is s-unital, 
 $s_{\lambda}s_{\lambda^*}$ is an infinite idempotent by Lemma~\ref{lem:3.9}\eqref{it3:lem3.9}.
It follows that $1_{Z(\lambda)}$ is an infinite idempotent in  $\KP(\Lambda)$.

Second, consider a non-empty $Z(\lambda\setminus F)$. Then there exists $\lambda'\in s(\lambda)\Lambda$ such that $Z(\lambda\lambda')\subseteq Z(\lambda\setminus F)$. Then 
\[1_{Z(\lambda\lambda')}\leqa 1_{Z(\lambda\setminus F)},\]
and $1_{Z(\lambda\lambda')}$ is infinite by Lemma~\ref{lem:3.9}\eqref{it2:lem3.9}.
Thus  $\KP(\Lambda)$ algebraically purely infinite simple by Corollary~\ref{cor:basis}.
 \end{proof}

\begin{cor}
Let $\Lambda$ be a  finitely aligned higher-rank graph. Suppose that $\Lambda$  is cofinal and aperiodic.
\begin{enumerate}
\item\label{cor-1} If $\KP(\Lambda)$ is algebraically purely infinite simple, then $\cs(\Lambda)$ is $\cs$-purely infinite simple.
\item\label{cor-2}Suppose that $s_v$ is an infinite idempotent in $\KP(\Lambda)$ for all $v\in \Lambda^0$.
Then each $s_v$ is an infinite projection in $\cs(\Lambda)$.
\end{enumerate}
\end{cor}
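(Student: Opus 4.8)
The plan is to deduce both statements from results already established in the excerpt. For part~\eqref{cor-1}, observe that if $\KP(\Lambda)$ is algebraically purely infinite simple, then since $\KP(\Lambda) = A(G_\Lambda)$ and $G_\Lambda$ is second-countable, ample and Hausdorff, we are in a position to invoke Corollary~\ref{cor:main}. The hypothesis ``cofinal and aperiodic'' translates, via \cite[Proposition~7.1]{CP} and \cite[Proposition~6.3]{CP}, to ``$G_\Lambda$ minimal and effective'', which is exactly what Corollary~\ref{cor:main} requires. Since algebraically purely infinite simple implies in particular algebraically purely infinite, Corollary~\ref{cor:main} gives that $\cs_r(G_\Lambda) = \cs(\Lambda)$ is $\cs$-purely infinite; and as it is also simple (again by minimality and effectiveness together with second countability), it is $\cs$-purely infinite simple.

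For part~\eqref{cor-2}, the strategy is to reduce to the case already handled in part~\eqref{cor-1} by first upgrading the hypothesis on the $s_v$ to pure infinite simplicity of the whole algebra. The key point is Corollary~\ref{cor:KPPIS}: under the standing assumptions that $\Lambda$ is cofinal and aperiodic, the condition ``$s_v$ is an infinite idempotent for all $v\in\Lambda^0$'' is \emph{equivalent} to $\KP(\Lambda)$ being algebraically purely infinite simple. So the assumption in \eqref{cor-2} lets us apply Corollary~\ref{cor:KPPIS} to conclude $\KP(\Lambda)$ is algebraically purely infinite simple, hence in particular algebraically purely infinite. Then for each $v$, the idempotent $s_v = 1_{Z(v)}$ is an infinite idempotent in $\KP(\Lambda) = A(G_\Lambda)$ (this is immediate from pure infinite simplicity, or simply by the hypothesis itself), and $s_v$ is a projection in $A(G_\Lambda)$ since $1_{Z(v)}^* = 1_{Z(v)^{-1}} = 1_{Z(v)}$ as $Z(v)\subseteq \go_\Lambda$. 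Applying Lemma~\ref{lem:referee} with $A_0 = A(G_\Lambda)$ and $A = \cs_r(G_\Lambda) = \cs(\Lambda)$ then gives that $s_v$ is an infinite projection in $\cs(\Lambda)$.

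I do not anticipate a genuine obstacle here: both parts are bookkeeping exercises in matching hypotheses to the translated graph conditions and citing the appropriate earlier results. The only point requiring a moment's care is confirming that ``algebraically purely infinite simple'' does entail ``algebraically purely infinite'' in the sense needed by Corollary~\ref{cor:main} --- but this is recorded in Remarks~\ref{rmk resolve ring}(3), and in any case Corollary~\ref{cor:main}'s proof only uses that every idempotent, in particular each $1_U$ with $U\subseteq\go$, is an infinite idempotent, which follows directly from \cite[Proposition~1.5]{AGP}. A second minor check is that in part~\eqref{cor-2} one really may invoke Corollary~\ref{cor:KPPIS}; its hypotheses (finitely aligned, cofinal, aperiodic) are precisely our standing assumptions, so this is automatic. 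Alternatively, part~\eqref{cor-2} can be proved without routing through Corollary~\ref{cor:KPPIS} at all: the hypothesis directly supplies that each $s_v = 1_{Z(v)}$ is an infinite idempotent in $A(G_\Lambda)$, and since it is a projection, Lemma~\ref{lem:referee} applies verbatim --- this shorter route avoids even mentioning pure infinite simplicity, though stating the link to Corollary~\ref{cor:KPPIS} is illuminating.
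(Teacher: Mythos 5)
Your proposal is correct. Part~\eqref{cor-1} matches the paper's intent: you pass from ``cofinal and aperiodic'' to ``$G_\Lambda$ minimal and effective'' and invoke Corollary~\ref{cor:main} plus simplicity of $\cs_r(G_\Lambda)$; the paper's printed proof cites Theorem~\ref{thm:main} here, but that result concerns $B(G)$, so the reference you chose (Corollary~\ref{cor:main}) is actually the apt one, and your remark that one only needs each $1_U$ to be an infinite idempotent is exactly how that corollary's proof runs. For part~\eqref{cor-2} you take a genuinely different and shorter route: the hypothesis already says each $s_v=1_{Z(v)}$ is an infinite idempotent in the dense $*$-subalgebra $A(G_\Lambda)$, and since it is a projection, Lemma~\ref{lem:referee} immediately upgrades it to an infinite projection in $\cs(\Lambda)$. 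The paper instead first bootstraps to pure infinite simplicity of $\KP(\Lambda)$ via Corollary~\ref{cor:KPPIS}, deduces that $\cs(\Lambda)$ is $\cs$-purely infinite simple, and then quotes \cite[Corollary~5.1]{BCS} that every projection in such a $C^*$-algebra is infinite. Your route is more elementary and in fact does not use cofinality or aperiodicity at all for part~\eqref{cor-2}, so it proves a slightly more general statement; the paper's route, by contrast, yields the stronger intermediate conclusion that \emph{every} projection of $\cs(\Lambda)$ is infinite, not just the vertex projections. Both arguments are valid, and your observation that the detour through Corollary~\ref{cor:KPPIS} is dispensable is sound.
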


\begin{proof} \eqref{cor-1} is immediate from Theorem~\ref{thm:main}.

 \eqref{cor-2}  Since  $s_v$ is an infinite idempotent in $\KP(\Lambda)$ for all $v\in \Lambda^0$, 
$\KP(\Lambda)$ is algebraically purely infinite simple by Corollary~\ref{cor:KPPIS}. Since $\KP(\Lambda)$ and $A(G_\Lambda)$ are isomorphic, 
Theorem~\ref{thm:main} implies that  $\cs(\Lambda)$ is $\cs$-purely infinite simple.  Then  every projection
 in $\cs(\Lambda)$ is an infinite projection by \cite[Corollary~5.1]{BCS}.
\end{proof}

We now connect our work with that of Larki  in \cite{Lar}. 
Recall from \cite[Lemma~3.2]{EvansSims:JFA2012} that
a pair $(\mu, \nu)$ is called a \emph{generalised cycle} if $Z(\mu) \subseteq Z(\nu)$.
 This generalised cycle has an \emph{entrance} if the containment is strict.  

Let $\Lambda$ be a finitely aligned,  aperiodic and cofinal $k$-graph such that every vertex of $\Lambda$ 
can be reached from a generalised cycle with an entrance. Then \cite[Theorem~5.4]{Lar} implies that $\KP(\Lambda)$
is algebraically purely infinite simple.  The next lemma shows that this also follows from Corollary~\ref{cor pis}.  

\begin{lemma} Let $\Lambda$ be a finitely aligned higher-rank graph. Suppose  that every vertex of  $\Lambda$ can be reached from a generalised cycle with an entrance. Then $G_\Lambda$ is locally contracting.
\end{lemma}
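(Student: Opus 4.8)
The goal is to show that $G_\Lambda$ is locally contracting: for every compact open $U \subseteq \go_\Lambda$ there is a nonempty compact open bisection $B$ with $s(B) \subsetneq r(B) \subseteq U$. The plan is to start with a basic open set inside $U$. Since the sets $Z(\lambda \setminus F)$ form a basis of compact open sets for $\go_\Lambda$, I can find $\lambda \in \Lambda$ and a finite $F \subseteq s(\lambda)\Lambda$ with $Z(\lambda \setminus F) \subseteq U$, and by further shrinking (as in the second part of the proof of Corollary~\ref{cor:KPPIS}) I may in fact assume I have a cylinder set $Z(\lambda) \subseteq U$ with $Z(\lambda)$ nonempty.

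Next I want to use the hypotheses. Let $v = s(\lambda)$. By assumption $v$ can be reached from a generalised cycle with an entrance, so there is a generalised cycle $(\mu, \nu)$ with $Z(\mu) \subsetneq Z(\nu)$ and a path $\tau$ with $r(\tau) = r(\mu) = r(\nu)$ and $s(\tau) = v$ — here I will need to unpack what "reached from" means and use finite alignment/cofinality to produce such a $\tau$; this is the step that ties together the graph-theoretic data. Having $Z(\mu) \subseteq Z(\nu)$ with $s(\mu) = s(\nu)$ gives a compact open bisection $Z(\mu, \nu) \subseteq G_\Lambda$ with $r(Z(\mu,\nu)) = Z(\mu)$ and $s(Z(\mu,\nu)) = Z(\nu)$ (or vice versa, depending on orientation conventions), and the entrance condition $Z(\mu) \subsetneq Z(\nu)$ makes this a bisection with $s$-image strictly containing the $r$-image — i.e. a "contracting" bisection based at the vertex $r(\mu)$.

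Then I transport this contracting bisection down the path $\lambda$ (and along $\tau$) into $Z(\lambda) \subseteq U$. Concretely, using the partial isometry picture $s_\lambda$ with $s_\lambda^* s_\lambda = s_v = s_{s(\lambda)}$ and $s_\lambda s_\lambda^* = 1_{Z(\lambda)}$, the bisection $Z(\lambda\tau\mu\tau', \lambda\tau\nu\tau')$ (for an appropriate completing path $\tau'$ obtained from $\Lambda^{\min}$, if needed to match sources) is a nonempty compact open bisection $B$ with $r(B) \subseteq Z(\lambda)$, $s(B) \subseteq Z(\lambda)$, and $s(B) \subsetneq r(B)$ because the strict containment $Z(\mu) \subsetneq Z(\nu)$ is preserved under the maps induced by the bisections $Z(\lambda\tau, \lambda\tau)$-type translations. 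Then $r(B) \subseteq Z(\lambda) \subseteq U$, which is exactly what is required.

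The main obstacle I expect is the bookkeeping in the middle step: correctly formalising "$v$ is reached from a generalised cycle with an entrance" into an honest path $\tau$ connecting the base vertex of the generalised cycle to $v$, and then carefully translating the strict containment $Z(\mu) \subsetneq Z(\nu)$ along $\lambda$ and $\tau$ while keeping all source/range matchings consistent and the resulting set a genuine bisection — this may require invoking finite alignment to choose minimal common extensions so that $Z(\mu)$ and $Z(\nu)$ can be "pulled back" to cylinder sets sitting inside $Z(\lambda)$. The orientation of the bisection (whether it is $s(B) \subsetneq r(B)$ or the reverse) must be pinned down against the conventions for $Z(\lambda,\mu)$ fixed earlier; replacing $B$ by $B^{-1}$ if necessary handles this.
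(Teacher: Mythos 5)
Your proposal is correct and follows essentially the same route as the paper: shrink $U$ to a cylinder set, connect its source vertex to a generalised cycle $(\mu,\nu)$ with an entrance via a path $\tau$ supplied directly by the hypothesis, and take the bisection $Z(\lambda\tau\mu,\lambda\tau\nu)$ --- no completing path $\tau'$, appeal to $\Lambda^{\min}$, or use of cofinality is actually needed here, since $s(\mu)=s(\nu)$ is part of the definition of a generalised cycle and the connecting path is literally what the hypothesis provides. The only slip is the stated orientation of $\tau$ (you write $s(\tau)=v$ and $r(\tau)=r(\mu)$), which contradicts the composability of $\lambda\tau\mu$; the correct reading of ``reached from'' gives $\tau\in v\Lambda r(\mu)$, exactly as your displayed bisection requires.
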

\begin{proof}  Fix a nonempty compact open $U \subseteq \go$. Then there exists $\lambda\in \Lambda$ and a 
finite subset $F$ of $s(\lambda)\Lambda$ such that $\emptyset\neq Z(\lambda\setminus F)\subseteq U$.  
Further, since every vertex can be reached from a generalised cycle, $Z(\lambda\setminus F) \neq \{\lambda\}$.
Thus there exists $\lambda'\in s(\lambda)\Lambda$ such that $d(\lambda') \neq 0$ and
$Z(\lambda\lambda')\subseteq Z(\lambda\setminus F)$. 
By assumption, $s(\lambda')$ can be reached from a generalised cycle $(\mu, \nu)$ with an entrance. 
Say $\lambda\lambda'\beta\mu\in\Lambda$. 
Then $(\lambda\lambda'\beta\mu,\lambda\lambda'\beta\nu)$ is a generalised cycle with an entrance.  
Then 
\[B=Z(\lambda\lambda'\beta\mu,\lambda\lambda'\beta\nu)\] 
 is a compact
open bisection with \[s(B)=Z(\lambda\lambda'\beta\nu)\subsetneq Z(\lambda\lambda'\beta\mu)=r(B)\subseteq U.\]
Thus $G_\Lambda$ is locally contracting. 
\end{proof}

We now restrict to row-finite locally convex $k$-graphs. Let $\Lambda$ be a $k$-graph. Then $\Lambda$ is \emph{row-finite} if $v\Lambda^n$ is finite for all $v\in \Lambda^0$ and $n\in \N^k$;  $\Lambda$ is \emph{locally
convex} if for every distinct $i,j\in \{1,....,k\}$, $v\in \Lambda^0$ and paths $\lambda\in v\Lambda^{e_i}, \mu\in v\Lambda^{e_j}$,  the sets $s(\lambda)\Lambda^{e_j}$ and $s(\mu)\Lambda^{e_i}$ are nonempty. 
Since our graphs might have sources, for $n\in \N$ we define 
 \[
 \Lambda^{\leq n}:=\{\lambda\in \Lambda: d(\lambda)\leq n \text{   and   } s(\lambda)\Lambda^{e_i}=\emptyset \text{  whenever  } d(\lambda)+e_i \leq n\}. \]
When $\Lambda$ is locally convex, by \cite[Proposition~2.12]{Webster} the boundary path space $\partial \Lambda$ equals 
\begin{align*}\Lambda^{\leq \infty} := 
&\{x: \Omega_{k,m} \to \Lambda: \text{ $x$ is a functor such that if }
 v \in  \Omega_{k,m}^0 \text{ and }  v\Omega_{k,m}^{\leq e_i} = \{v\},\\ & \text{ then }
x(v)\Lambda^{\leq e_i} = \{x(v)\}\}.\end{align*}

 A path $\alpha\in \Lambda\setminus \Lambda^0$ is a {\em return path} if $r(\alpha)=s(\alpha)$; a return path $\alpha$ 
 has an {\em entrance} if there exists a $\delta\in s(\alpha)\Lambda$ with $d(\delta)\leq d(\alpha)$ 
 and $\alpha(0,d(\delta))\neq \delta$. 
 Our aim  is to prove the  following:

\begin{prop}\label{prop:entrance} Let $\Lambda$ be a  
row-finite locally convex higher-rank graph. Suppose that $\Lambda$ is cofinal and aperiodic, and that $\Lambda$ contains a return path with an entrance.  Then
 each vertex in $\Lambda$ can be reached from a return path with an entrance.  
\end{prop}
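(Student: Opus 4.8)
The plan is to fix an arbitrary vertex $v\in\Lambda^0$ and to exhibit a return path with an entrance whose base vertex can be joined to $v$ by a path (in the direction that makes $v$ lie over it, i.e.\ so that $s_v$ lies in the ideal generated by the return path). Throughout we use that, since $\Lambda$ is row-finite and locally convex, $\partial\Lambda=\Lambda^{\le\infty}$ by \cite[Proposition~2.12]{Webster}; in particular $Z(\mu)\neq\emptyset$ for every $\mu\in\Lambda$, so every finite path extends to a boundary path and every vertex is the range of a boundary path.

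Write $w:=r(\tau)=s(\tau)$ for the base of the given return path $\tau$, and let $\delta\in w\Lambda$ with $d(\delta)\le d(\tau)$ and $\tau(0,d(\delta))\neq\delta$ witness the entrance. The first step is a transplanting observation: if $u\in\Lambda^0$ admits paths $\gamma\in u\Lambda w$ and $\gamma'\in w\Lambda u$, then $\gamma\tau\gamma'$ is a return path based at $u$, and $\gamma\delta$ is an entrance for it. Indeed $d(\gamma\delta)=d(\gamma)+d(\delta)\le d(\gamma\tau\gamma')$, and $(\gamma\tau\gamma')(0,d(\gamma\delta))=\gamma\,\tau(0,d(\delta))$, which differs from $\gamma\delta$ by unique factorisation since $\tau(0,d(\delta))\neq\delta$. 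Thus the set $\mathcal{V}$ of vertices that are bases of return paths with entrances contains $w$ and, more usefully, contains every vertex that can both be reached from $w$ and reach back to $w$.

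The heart of the argument is then a recurrence statement: there is a boundary path $x\in\partial\Lambda$ with $r(x)=w$ such that for every $p\le d(x)$ there is $q$ with $p\le q\le d(x)$ and $x(q)\in\mathcal{V}$. To construct $x$ one extends $\tau$ and repeatedly wraps around it: in the coordinate directions in which $d(\tau)$ is nonzero the path $x$ simply repeats $\tau$, which keeps returning to $w$; in the remaining ``stuck'' directions, where local convexity may force $x$ to progress, one inserts a detour, and here one invokes Lemma~\ref{lem:ret} (together with cofinality, aperiodicity and row-finiteness) to arrange that each detour returns to a vertex that is reachable from $w$ and reaches back to $w$, hence lies in $\mathcal{V}$ by the transplanting observation. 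Row-finiteness guarantees only finitely many choices at each finite stage, so a diagonal argument yields an honest functor $x\in\Lambda^{\le\infty}=\partial\Lambda$.

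Granting the recurrence statement the proof concludes at once: given $v$, apply cofinality to $v$ and this $x$ to obtain $p\le d(x)$ and a path $\gamma\in v\Lambda\,x(p)$; choose $q\ge p$ with $x(q)\in\mathcal{V}$, and let $\alpha$ be a return path with an entrance based at $x(q)$. Then $\gamma\cdot x(p,q)$ is a path from $v$ to the base of $\alpha$, so $v$ is reached from the return path $\alpha$ with an entrance. The main obstacle is the recurrence statement, and specifically the handling of the ``stuck'' coordinate directions so that the boundary path keeps returning to $\mathcal{V}$; this is where local convexity, row-finiteness and Lemma~\ref{lem:ret} do the real work, whereas the transplanting observation and the appeal to cofinality are routine by comparison.
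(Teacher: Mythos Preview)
Your overall strategy matches the paper's: build a boundary path $x$ that hits bases of return paths with entrances cofinally often (the paper isolates this as Corollary~\ref{infinitepathwithentrance}), then invoke cofinality exactly as you do in your final paragraph. However, the transplanting observation is a red herring and your description of how Lemma~\ref{lem:ret} enters is off. Lemma~\ref{lem:ret} does not arrange for a detour to ``return to a vertex reachable from $w$ and reaching back to $w$''; rather, given a return path $\alpha$ with entrance and a path $\mu$ in a stuck direction, it manufactures a \emph{new} return path $\beta$ with an entrance, based at some fresh vertex $s(\beta)$, together with a connecting path back to $s(\alpha)$. Thus $s(\beta)\in\mathcal{V}$ directly, without ever invoking the transplant. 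The paper's construction of $x$ accordingly does not revolve around $w$: it iterates Lemma~\ref{lem:ret} to produce a sequence $\alpha,\beta_1,\beta_2,\dots$ of return paths with entrances at successive vertices, and sets $x=\alpha\mu_1\beta_1\mu_2\beta_2\cdots$ (or $x=\beta_i\beta_i\cdots$ if the process stabilises), so that the vertices $s(\beta_i)$ witness your recurrence statement. Note also that neither cofinality nor aperiodicity is used in building $x$; cofinality enters only at the very last step, and aperiodicity is not used at all in the proof.
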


We then get:

\begin{cor}
\label{cor:KP}
Let $\Lambda$ be a  row-finite locally convex higher-rank graph. Suppose that $\Lambda$   is  
cofinal and aperiodic, and   that $\Lambda$ contains a return path with an entrance. 
Then $\KP(\Lambda)$ and $C^*(\Lambda)$ are algebraically and $C^*$-purely infinite simple,  respectively.
\end{cor}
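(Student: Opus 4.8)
The plan is to reduce the statement to the locally contracting case. The substantive work is already carried out in Proposition~\ref{prop:entrance}: once we know that every vertex of $\Lambda$ can be reached from a return path with an entrance, it will follow that $G_\Lambda$ is locally contracting, and then $\KP(\Lambda)$ is algebraically purely infinite simple by Corollary~\ref{cor pis}, while $\cs(\Lambda)$ is $\cs$-purely infinite simple by Corollary~\ref{cor:main} together with simplicity of $\cs_r(G_\Lambda)$.

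First I would invoke Proposition~\ref{prop:entrance}: since $\Lambda$ is row-finite, locally convex, cofinal, aperiodic and contains a return path with an entrance, every vertex of $\Lambda$ can be reached from a return path with an entrance. Next I would observe that a return path $\alpha$ with an entrance is, in particular, a generalised cycle with an entrance. Because $r(\alpha)=s(\alpha)$, the pair $(\alpha,r(\alpha))$ satisfies $Z(\alpha)\subseteq Z(r(\alpha))$, so it is a generalised cycle; and if $\delta\in s(\alpha)\Lambda$ witnesses the entrance, that is $d(\delta)\leq d(\alpha)$ and $\alpha(0,d(\delta))\neq\delta$, then the unique factorisation property gives $Z(\delta)\cap Z(\alpha)=\emptyset$, while $Z(\delta)$ is a nonempty subset of $Z(r(\alpha))$. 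Nonemptiness of $Z(\delta)$ uses the standing fact that in a row-finite locally convex $k$-graph $\partial\Lambda=\Lambda^{\leq\infty}$ and every path extends to a boundary path. Hence $Z(\alpha)\subsetneq Z(r(\alpha))$, so $(\alpha,r(\alpha))$ is a generalised cycle with an entrance. Since row-finite $k$-graphs are finitely aligned, the lemma immediately preceding Proposition~\ref{prop:entrance} now applies and shows $G_\Lambda$ is locally contracting.

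To finish, I would assemble the pieces. As $\Lambda$ is cofinal and aperiodic, $G_\Lambda$ is minimal by \cite[Proposition~7.1]{CP} and effective by \cite[Proposition~6.3]{CP}; it is also ample, Hausdorff and second countable. Applying Corollary~\ref{cor pis} with the field $\C$ shows that $\KP(\Lambda)=A(G_\Lambda)$ is algebraically purely infinite simple, and in particular algebraically purely infinite (Remark~\ref{rmk resolve ring}). Corollary~\ref{cor:main} then gives that $\cs(\Lambda)=\cs_r(G_\Lambda)$ is $\cs$-purely infinite; since $G_\Lambda$ is second countable, minimal and effective, $\cs(\Lambda)$ is simple, so $\cs(\Lambda)$ is $\cs$-purely infinite simple by \cite[Proposition~5.4]{KR}. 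I do not expect a real obstacle in this corollary: the one step requiring a little care is identifying a return path with an entrance as a generalised cycle with an entrance, which rests on the nonemptiness of cylinder sets in a row-finite locally convex $k$-graph; everything else is bookkeeping with the results established earlier.
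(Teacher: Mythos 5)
Your proof is correct, and it reaches the conclusion by a genuinely different (though closely parallel) route from the paper's. The paper's own proof is very short: it notes that if $\mu$ is a return path with an entrance then $(\mu\mu,\mu)$ is a generalised cycle with an entrance, invokes Proposition~\ref{prop:entrance} so that every vertex is reached from such a cycle, and then quotes Larki's result \cite[Theorem~5.4]{Lar} to get that $\KP(\Lambda)$ is algebraically purely infinite simple, finishing with Theorem~\ref{thm:main} (via the unnumbered corollary in Section~\ref{section:KP}) for the $C^*$-statement. You instead stay entirely inside the paper's own machinery: after Proposition~\ref{prop:entrance}, you pass through the unnumbered lemma showing $G_\Lambda$ is locally contracting, then use Corollary~\ref{cor pis} for the algebraic conclusion and Corollary~\ref{cor:main} plus simplicity and \cite[Proposition~5.4]{KR} for the $C^*$ one. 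Both arguments hinge on the same substantive input, Proposition~\ref{prop:entrance}; what your route buys is self-containedness (no appeal to \cite{Lar}) and an explicit verification that a return path with an entrance yields a generalised cycle with an entrance --- your check that $Z(\delta)$ is nonempty and disjoint from $Z(\alpha)$, so that $Z(\alpha)\subsetneq Z(r(\alpha))$, is exactly the point the paper elides when it asserts without proof that $(\mu\mu,\mu)$ is a generalised cycle with an entrance. The paper's route is shorter to state but outsources that step to Larki. One cosmetic remark: your pair $(\alpha, r(\alpha))$ uses a degree-zero second coordinate, whereas the paper uses $(\mu\mu,\mu)$; both are legitimate under the definition of generalised cycle recalled from \cite{EvansSims:JFA2012}, and in either case the cycle is based at $s(\alpha)$, so reachability of a vertex from the return path transfers to reachability from the generalised cycle, as the lemma requires.
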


\begin{proof}
Let $\mu$ be a return path with an entrance. Then  $(\mu\mu, \mu)$ is a generalised cycle with 
an entrance. Then Proposition~\ref{prop:entrance} implies that every vertex can be reached from a generalised cycle. 
By  \cite[Theorem~5.4]{Lar}, $\KP(\Lambda)$ is  algebraically purely infinite simple. 
Then $C^*(\Lambda)$ is $C^*$-purely infinite simple by Theorem~\ref{thm:main}.
\end{proof}

For $1$-graphs, Proposition~\ref{prop:entrance}  follows directly from cofinality:   if $\alpha$ is a return 
path with an entrance, then $x=\alpha\alpha\cdots$ is a boundary path,  and  cofinality implies that for every 
$v\in \Lambda^0$ there exists $\mu_v \in v\Lambda$ such that $s(\mu_v)$ is a vertex on $x$ and hence on $\alpha$.  
When we try to apply this idea to a general $k$-graph it fails: $x=\alpha\alpha\cdots$ may not be a boundary path.   
In fact, $x$ is not a boundary path whenever  $d(\alpha)\wedge (1,1,\ldots,1)\neq (1,1,\ldots, 1)$. The keys to proving  Proposition~\ref{prop:entrance} are Lemma~\ref{lem:ret} and  Corollary~\ref{infinitepathwithentrance}, 
where we construct a boundary path $x$ that has enough return paths with entrances.

\begin{lemma}\label{lem:ret} Let $\Lambda$ be a row-finite and locally convex higher-rank graph.  Suppose that $\Lambda$ has  a return path $\alpha$ with an entrance and  that there exists $\mu\in s(\alpha)\Lambda\setminus\{s(\alpha)\}$ with 
$d(\mu)\wedge d(\alpha)=0$.  Then there exist $P\in \N$, a return path $\beta$ with an entrance  such that  $d(\beta)=Pd(\alpha)$,  and a path from $s(\beta)$ to $s(\alpha)$ of degree at least $d(\mu)$.  
\end{lemma}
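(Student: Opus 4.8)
The plan is to build the return path $\beta$ by concatenating sufficiently many copies of $\alpha$ so that room opens up in each coordinate direction to ``absorb'' the path $\mu$. Since $\alpha$ is a return path with $r(\alpha)=s(\alpha)=:v$, the concatenation $\alpha^n:=\alpha\alpha\cdots\alpha$ ($n$ times) is a well-defined path in $v\Lambda v$ with $d(\alpha^n)=nd(\alpha)$. Because $d(\mu)\wedge d(\alpha)=0$, the support of $d(\mu)$ is disjoint from the support of $d(\alpha)$; write $d(\alpha)=\sum_{i\in S}a_ie_i$ with each $a_i>0$ and $d(\mu)=\sum_{j\notin S}m_je_j$. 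First I would use the unique factorisation property to decompose each copy of $\alpha$ along the coordinates in $S$: iterating factorisation, for any $p\le nd(\alpha)$ there is a unique $\lambda\in v\Lambda$ with $d(\lambda)=p$ and $\alpha^n=\lambda\,\sigma^p(\alpha^n)$, and I would choose $P$ large enough that $Pd(\alpha)\ge d(\mu)$ coordinatewise on the coordinates where $d(\mu)$ is supported --- which is automatic only if those coordinates appeared in $d(\alpha)$, so here the hypothesis $d(\mu)\wedge d(\alpha)=0$ forces me instead to route through $\mu$ in the complementary directions. The point of $\mu$ is precisely that it travels in directions $\alpha$ does not, so I can attach $\mu$ (or an extension of it) at a suitable vertex of $\alpha^P$ without disturbing the cyclic structure.

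The second and more delicate step is to produce, from the return path $\alpha$ with an entrance, a \emph{new} return path $\beta$ of degree exactly $Pd(\alpha)$ that still has an entrance, together with a path from $s(\beta)$ to $s(\alpha)$ of degree at least $d(\mu)$. Here I would set $\beta:=\alpha^P$ for the $P$ chosen above; then $d(\beta)=Pd(\alpha)$ and $r(\beta)=s(\beta)=v=s(\alpha)$, so the path of degree $\ge d(\mu)$ from $s(\beta)$ to $s(\alpha)$ can be taken to be an extension of $\mu$ itself (using row-finiteness and local convexity to extend $\mu$ inside $v\Lambda$ to absorb the remaining coordinates up to $d(\mu)$, or just $\mu$ if $d(\mu)$ is already the target). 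For the entrance: $\alpha$ has an entrance, so there is $\delta\in s(\alpha)\Lambda=v\Lambda$ with $d(\delta)\le d(\alpha)$ and $\alpha(0,d(\delta))\ne\delta$. Since $\alpha=\alpha^P(0,d(\alpha))$, the same $\delta$ witnesses that $\beta=\alpha^P$ has an entrance: $d(\delta)\le d(\alpha)\le Pd(\alpha)=d(\beta)$ and $\beta(0,d(\delta))=\alpha^P(0,d(\delta))=\alpha(0,d(\delta))\ne\delta$.

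The third step is simply bookkeeping: verify that $\beta$, $P$, and the path extending $\mu$ satisfy all three demands of the statement --- $\beta$ is a return path ($r(\beta)=v=s(\beta)$), $\beta$ has an entrance (previous paragraph), $d(\beta)=Pd(\alpha)$ (by construction), and there is a path from $s(\beta)=v$ to $s(\alpha)=v$ of degree $\ge d(\mu)$. For the last item I would take a path obtained from $\mu$ by the usual extension argument: row-finiteness guarantees the relevant edge-sets are nonempty along the coordinates where $d(\mu)$ falls short, and local convexity lets us keep extending coordinatewise; alternatively, if one only needs degree \emph{at least} $d(\mu)$ and $v$ emits in enough directions, a direct appeal to the structure of $v\Lambda$ suffices.

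\textbf{Main obstacle.} The real subtlety is the interaction of the two hypotheses: $\alpha$'s degree and $\mu$'s degree have disjoint support, so concatenating copies of $\alpha$ never by itself creates the coordinate-room needed in the $d(\mu)$-directions, and one has to be careful that attaching $\mu$ (or its extension) at a vertex of $\alpha^P$ produces a genuine path whose range and source still coincide and whose entrance is preserved. I expect the bulk of the technical work --- and the place where row-finiteness and local convexity are genuinely used --- to be in choosing $P$ and the extension of $\mu$ so that the vertex $s(\beta)$ both emits the required degree-$d(\mu)$ path to $s(\alpha)$ and does not destroy the entrance property; the cyclic concatenation and the transport of the entrance witness $\delta$ are routine by comparison.
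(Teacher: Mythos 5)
There is a genuine gap, and it is located exactly where the lemma's content lies. You set $\beta:=\alpha^P$, so $s(\beta)=s(\alpha)$, and the required ``path from $s(\beta)$ to $s(\alpha)$ of degree at least $d(\mu)$'' would then have to be a path in $s(\alpha)\Lambda s(\alpha)$ of degree $\geq d(\mu)$, i.e.\ a \emph{return path} at $s(\alpha)$ of degree at least $d(\mu)$. The hypothesis only supplies $\mu\in s(\alpha)\Lambda$, whose source $s(\mu)$ need not be $s(\alpha)$; neither row-finiteness nor local convexity lets you close $\mu$ (or any extension of it) up into a cycle. Your phrase ``attach $\mu$ at a suitable vertex of $\alpha^P$ without disturbing the cyclic structure'' is precisely the step that cannot be carried out: once you leave the cycle along $\mu$ there is no mechanism to return. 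Moreover, even if one could somehow satisfy the literal statement with $\beta=\alpha^P$, that choice would defeat the purpose of the lemma in Corollary~\ref{infinitepathwithentrance}, where $\beta$ must be based at a vertex genuinely downstream of $s(\alpha)$ in the $d(\mu)$-directions so that the concatenation $\alpha\mu_1\beta_1\mu_2\beta_2\cdots$ makes progress toward being a boundary path.

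The missing idea is a pigeonhole argument driven by row-finiteness. One repeatedly applies unique factorisation to write $\delta\mu_0=\mu_1\gamma_1$ and then $\alpha\mu_{i-1}=\mu_i\gamma_i$ with $\mu_i\in s(\alpha)\Lambda^{d(\mu)}$ and $\gamma_i\in\Lambda^{d(\alpha)}s(\mu_{i-1})$ (this is where $d(\mu)\wedge d(\alpha)=0$ is used, to split the degrees cleanly). Since $s(\alpha)\Lambda^{d(\mu)}$ is finite, some $\mu_m=\mu_n$ with $m<n$, and $\beta:=\gamma_n\cdots\gamma_{m+1}$ is a return path based at $s(\mu_m)$, which \emph{is} reached from $s(\alpha)$ by the path $\delta\mu_0\gamma_1\cdots\gamma_m$ of degree at least $d(\mu)$. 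A further case analysis (when $m=0$, so that the natural entrance candidate $\gamma_m$ is unavailable) is then needed to guarantee $\beta$ has an entrance; transporting the entrance $\delta$ of $\alpha$ directly, as you do, only works for powers of $\alpha$ itself and not for the $\beta$ one is forced to construct.
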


\begin{proof}
Let $\alpha$ be a return path with an entrance $\delta$. We may    
extend $\delta$ until $\delta\in s(\alpha)\Lambda^{\leq d(\alpha)}$.   
By assumption there exists  $\mu\in s(\alpha)\Lambda\setminus \{s(\alpha)\}$ with $d(\mu)\wedge d(\alpha)=0$.

By local convexity applied to the pair $(\mu,  \delta)$, there  exists $\mu_0\in s(\delta)\Lambda^{d(\mu)}$. If 
$s(\delta)=r(\delta)$ we can take $\mu_0=\mu$.  We will now use the unique factorisation property to construct the path $\beta$.

Consider $\delta\mu_0$.  Applying the unique factorisation property to the path $\delta\mu_0$ gives $\mu_1\in s(\alpha) \Lambda^{d(\mu)}$ and $\gamma_1\in \Lambda^{\leq d(\alpha)} s(\mu_0)$ such that 
\[
\delta\mu_0=\mu_1\gamma_1.
\]
By the unique factorisation property and induction,  for $i\geq 2$ there exist $\mu_i\in s(\alpha)\Lambda^{d(\mu)}$ and $\gamma_i\in \Lambda^{d(\alpha)}s(\mu_{i-1})$ such  that
\begin{equation}\label{constructingpath}
\alpha \mu_{i-1}=\mu_i\gamma_i.
\end{equation}
Notice that $\{\mu_i: i\geq 0\}\subseteq s(\alpha)\Lambda^{d(\mu)}$ is finite because $\Lambda$ is row-finite.
So there exists $i\neq j$ such that $\mu_i=\mu_j$, and in particular $s(\mu_i)=s(\mu_j)$. 
Let $m,n\in\N$ with $m<n$ be such that $(m,n)$ is   smallest with respect to the dictionary order and  such that  $\mu_m=\mu_n$. Then
\[\theta:=\gamma_{n}\gamma_{n-1}\cdots\gamma_{m+1}\]
is a return path.  

First, suppose that $m\neq 0$. By the minimality of $(m, n)$,  $\gamma_{m}\neq \gamma_i$ for  $m+1\leq i\leq n$, and hence $\gamma_{m}$ is an entrance to $\theta$.  Notice that $\delta\mu_0\gamma_1\dots\gamma_m$ is a path of degree more than $d(\mu)$ from $s(\theta)$ to $s(\alpha)$.
Since $d(\gamma_i)=d(\alpha)$ for $i\geq 2$, we get that   $\beta:=\theta$ with $P=m-n$ satisfies the lemma. 

Second, suppose that $m=0$. Then 
\[\theta=\gamma_{n}\cdots \gamma_1.\]   
Here $\theta$ is a return path, but it may not have an entrance. We will  construct another return path with an entrance as required.  Since $\mu_n=\mu_0$, we have $s(\alpha)=r(\delta)=s(\delta)$. In particular, $d(\delta)=d(\alpha)$ (for otherwise we could extend $\delta\in \Lambda^{\leq d(\alpha)}$ further using $\alpha$).

We claim that there exist $q\in \N$ and $1\leq i\leq n$ such that $\gamma_{i+qn}\neq \gamma_{i}$. Aiming for a contradiction, we suppose that $\gamma_{i+nq}=\gamma_{i}$ for all   $q\in\N$ and $1\leq i\leq n$. 
We will show by induction on $q$ that 
\[ \Gamma_q:=\{\mu_1,\mu_{1+n},\ldots, \mu_{1+qn}\}\] has
$q+1$ distinct elements. Then $\Gamma_q \subseteq s(\alpha)\Lambda^{d(\mu)}$ for every $q$, and this contradicts that $s(\alpha)\Lambda^{d(\mu)}$ is finite.
When $q=0$ we have $\Gamma_q=\{\mu_1\}$ and there is nothing to prove. For the inductive step, we  will use the following two observations. 
\begin{enumerate}
\item\label{obs1} $\mu_1\neq \mu_{1+tn}$ for any $t\in \N$ (for otherwise \eqref{constructingpath} gives $\alpha\mu_{tn}=\mu_{1+tn}\gamma_{1+tn}=\mu_1\gamma_1=\delta\mu_0$, contradicting that $\alpha\neq \delta$).
\item\label{obs2} Let $1\leq i\leq n$. Then  \eqref{constructingpath} gives
\[
\mu_{i+1+tn}\gamma_{i+1}=\mu_{i+1+tn}\gamma_{i+1+tn}=\alpha\mu_{i+tn}.
\]
So if $\{\mu_{i+tn}:0\leq t\leq q \}$ has $q+1$ elements, then $\{\mu_{i+1+tn}:0\leq t\leq q \}$ has $q+1$ elements.
\end{enumerate}
Now suppose that $q\geq 0$ and that $\Gamma_q$ has $q+1$ elements. We apply the observation~(\ref{obs2}) once to get that
\[
\{\mu_2, \mu_{2+n}, \dots, \mu_{2+qn} \}
\]
has $q+1$ elements. After $n-1$ further applications of observation~(\ref{obs2}) we get that 
\[
\{\mu_{1+q}, \mu_{1+2q}, \dots, \mu_{1+(q+1)n}\}
\]
has $q+1$ elements. Now we use observation~(\ref{obs1}) to see that 
\[
\{\mu_1, \mu_{1+q}, \mu_{1+2q}, \dots, \mu_{1+(q+1)n}\}
\]
has $q+2$ elements. This proves our claim. 

Let $(q_0,i_0)$ be smallest in  the dictionary order such that $\gamma_{i_0+q_0n}\neq \gamma_{i_0}$.
Since  $\{\mu_i:i\geq i_0+q_0n\}$ is finite, there exist smallest $(i,j)$  such that $i_0+q_0n\leq i<j$ and $s(\mu_i)=s(\mu_j)$.  Then
\[
\beta:=\gamma_j\dots\gamma_{i+1}
\]
is a return path of degree $(j-1)d(\alpha)$. If $i>i_0+q_0n$, then $\gamma_{i-1}$ is an entrance to $\beta$. If $i=i_0+q_0n$, then $\gamma_{i_0-1}$ is an entrance. In either case there is a path from $s(\beta)$ to $s(\alpha)$ of degree at least $d(\mu)$.
\end{proof}

\begin{cor}\label{infinitepathwithentrance} Let $\Lambda$ be a row-finite and
locally convex $k$-graph. Suppose that $\Lambda$ contains a return path with an entrance.  Then there 
exists $x\in \Lambda^{\leq \infty}$ such that for all $m\leq d(x)$ there exists $n\geq m$ 
and a $p\in\N^k$ such that $x(n, n+p)$ is a return path with an entrance.  
\end{cor}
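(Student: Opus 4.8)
The plan is to build the boundary path $x$ by an infinite iterative splicing argument, using Lemma~\ref{lem:ret} repeatedly to guarantee that return paths with entrances reoccur cofinally often in every coordinate direction. First I would fix a return path $\alpha_0$ with an entrance, and set out to construct $x$ coordinate-by-coordinate so that it is a functor $x:\Omega_{k,m}\to\Lambda$ satisfying the $\Lambda^{\leq\infty}$ condition of Webster's characterisation of $\partial\Lambda$ (the excerpt identifies $\partial\Lambda=\Lambda^{\leq\infty}$ for row-finite locally convex $\Lambda$). The key point is that to make $x$ a boundary path we must, whenever $x(v)$ has an edge in direction $e_i$ available, actually extend in direction $e_i$; so the construction alternates between (a) extending by a return path with an entrance (obtained from Lemma~\ref{lem:ret}) and (b) ``filling in'' the mandatory extensions in the other coordinate directions, which is exactly where the hypothesis $d(\mu)\wedge d(\alpha)=0$ in Lemma~\ref{lem:ret} is needed: it lets us absorb a transverse extension $\mu$ of degree orthogonal to $d(\alpha)$ while still producing a return path whose degree is a multiple of $d(\alpha)$ and which retains an entrance.

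Concretely, I would proceed in stages indexed by $\ell\in\N$. At stage $\ell$ we have a finite path $\rho_\ell$ ending at some vertex $v_\ell$, together with the guarantee that $\rho_\ell$ contains, for each coordinate direction $i$ for which return paths with entrances have been ``scheduled'', a subpath $x(n,n+p)$ that is a return path with an entrance with $n$ exceeding any previously chosen threshold. To pass to stage $\ell+1$: first determine, from the $\Lambda^{\leq\infty}$ condition, which directions $e_i$ at $v_\ell$ force an extension; choose a transverse path $\mu_\ell\in v_\ell\Lambda$ realising all those forced extensions, with $d(\mu_\ell)$ chosen to have $d(\mu_\ell)\wedge d(\alpha_0)=0$ if possible (after relabelling, $\alpha_0$ lives in a sub-range of directions and $\mu_\ell$ in the complementary ones, up to bounded overlap that we handle by iterating $\alpha_0$ a bounded number of times). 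Then apply Lemma~\ref{lem:ret} with $\alpha=\alpha_0$ (or a suitable power) and $\mu=\mu_\ell$ to obtain $P_\ell$, a return path $\beta_\ell$ with an entrance of degree $P_\ell d(\alpha_0)$, and a path $\eta_\ell$ from $s(\beta_\ell)$ to $s(\alpha_0)$ of degree at least $d(\mu_\ell)$; splice $\beta_\ell$ then $\eta_\ell$ onto $\rho_\ell$. Using cofinality we can also route between the finitely many vertices we need to visit so that eventually every vertex of $\Lambda$ is reached — though for this corollary we only need the boundary path to exist with infinitely many return-path-with-entrance subpaths in each live direction, not to reach every vertex.

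The direct limit $x:=\lim_\ell \rho_\ell$ is then a degree-preserving functor on $\Omega_{k,d(x)}$, and the alternation in the construction ensures that at every vertex $x(v)$ for which $v\Omega_{k,d(x)}^{\leq e_i}=\{v\}$ we did not in fact have an available $e_i$-edge (we would have used it), so $x\in\Lambda^{\leq\infty}=\partial\Lambda$. For the final property: given $m\leq d(x)$, choose a stage $\ell$ large enough that the portion of $\rho_\ell$ beyond the prefix of degree $m$ contains a spliced copy of some $\beta_\ell$, which by Lemma~\ref{lem:ret} is a return path with an entrance; reading off its location in $x$ gives $n\geq m$ and $p=d(\beta_\ell)=P_\ell d(\alpha_0)\in\N^k$ with $x(n,n+p)$ a return path with an entrance. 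I expect the main obstacle to be the bookkeeping in step (b): showing that the forced transverse extensions $\mu_\ell$ can always be chosen with degree (essentially) orthogonal to $d(\alpha_0)$ so that Lemma~\ref{lem:ret} applies — when $d(\alpha_0)$ is not supported on a single coordinate this requires passing to a power of $\alpha_0$ and carefully tracking which coordinates are ``consumed'' by the return path versus ``filled'' transversally, and verifying that the resulting $x$ genuinely satisfies the boundary-path condition in the coordinates where $d(\alpha_0)$ and the $d(\mu_\ell)$ overlap.
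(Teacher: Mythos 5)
Your overall strategy coincides with the paper's: build $x$ by repeatedly invoking Lemma~\ref{lem:ret} and concatenating the resulting return paths with entrances, interleaved with the transverse extensions forced by the boundary-path condition. But the step you yourself flag as ``the main obstacle'' --- arranging that each forced transverse extension $\mu_\ell$ satisfies $d(\mu_\ell)\wedge d(\alpha)=0$ so that Lemma~\ref{lem:ret} applies --- is left genuinely unresolved, and it is exactly the crux of the argument. Your proposed remedy, passing to a power of $\alpha_0$, cannot work: $d(\alpha_0^P)=P\,d(\alpha_0)$ has the same support as $d(\alpha_0)$, so replacing $\alpha_0$ by a power does nothing to make a forced extension orthogonal to it. The observation that closes the gap is that orthogonality is automatic. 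If $\alpha\alpha\cdots$ fails to lie in $\Lambda^{\leq\infty}$, the failure can only occur in a coordinate $i$ with $d(\alpha)_i=0$, because in every coordinate of $\supp(d(\alpha))$ the repeated path already has infinite degree and the defining condition of $\Lambda^{\leq\infty}$ imposes nothing there. Hence a witness to the failure may be chosen in $s(\alpha)\Lambda^{\leq t}$ with $t=(1,\dots,1)-(1,\dots,1)\wedge d(\alpha)$, and any such $\mu$ satisfies $d(\mu)\wedge d(\alpha)=0$ by construction; there are no ``overlapping coordinates'' to track.

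A secondary structural difference: at stage $i+1$ the paper applies Lemma~\ref{lem:ret} to the return path $\beta_i$ just constructed (whose degree $P_i\,d(\alpha)$ has the same support as $d(\alpha)$, so the same $t$ works), not to the original $\alpha_0$. The candidate boundary path is then either $\beta_{i_0}\beta_{i_0}\cdots$, if the process terminates, or $\alpha\mu_1\beta_1\mu_2\beta_2\cdots$, and the construction always moves forward. This removes any need for your return routes $\eta_\ell$ back to $s(\alpha_0)$, and also for the appeal to cofinality, which is not a hypothesis of this corollary and should not enter its proof.
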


\begin{proof}
Let $\alpha$ be a return path with an entrance.  We construct a boundary path by induction.
If $\alpha\alpha\cdots\in \Lambda^{\leq \infty}$ we are done.  
So suppose that $\alpha\alpha\cdots\not\in \Lambda^{\leq \infty}$.  Let  
\[t=(1,\ldots,1)-(1,\ldots, 1)\wedge d(\alpha).\] 
Since $\alpha\alpha\cdots\notin\Lambda^{\leq \infty}$,  there exists
$\mu_1\in s(\alpha)\Lambda^{\leq t}$.
Using Lemma~\ref{lem:ret} we find $P_1\in \N$ and a return 
path $\beta_1$ with an entrance  such that, without loss of 
generality, $\beta_1 \in  s(\mu_1)\Lambda^{P_1d(\alpha)}$.

Let $i\geq 1$, and suppose that  we have $\mu_i\in \Lambda^{\leq t}$, $P_i$ and $\beta_i\in s(\mu_i)\Lambda^{P_i d(\alpha)} s(\mu_i)$ such that $\beta_i$  has an entrance. If $\beta_i\beta_i\cdots\in \Lambda^{\leq\infty}$, then take $x=\beta_i\beta_i\cdots$. 
Otherwise there exists $\mu_{i+1}\in s(\beta_i)\Lambda^{\leq t}$.  Using  Lemma~\ref{lem:ret} applied to $\beta_i$ there exists 
$P'_{i+1}\in \N$ and a return path $\beta_{i+1}\in s(\mu_{i+1})\Lambda^{P_{i+1}'P_id(\alpha)}$ with an entrance. Take $P_{i+1}=P_{i+1}'P_i$.  

If this process terminates at $\beta_{i_0}$ take $x=\beta_{i_0}\beta_{i_0}\cdots$.
If this process never terminates take 
\[x=\alpha\mu_1\beta_1\mu_2\beta_2\cdots.\]
In either case, $x\in \Lambda^{\leq \infty}$ has the desired properties.  
\end{proof}

\begin{remark}
Corollary~\ref{infinitepathwithentrance} fills a gap in the 
proof of \cite[Corollary~5.7]{EvansSims:JFA2012} and in the last statement of \cite[Proposition~8.8]{Sims}.  
The last statement of \cite[Proposition~8.8]{Sims} claims that if $\Lambda$ is cofinal and contains a return path with an entrance, then every  $v\in \Lambda^0$ can be reached  from a return path with an entrance. Our Corollary~\ref{infinitepathwithentrance} and cofinality ensure this is indeed 
 the case. For  \cite[Corollary~5.7]{EvansSims:JFA2012},  Evans and Sims construct a single return path with entrance and then apply  \cite[Proposition~8.8]{Sims}.
\end{remark}

Now we are ready to prove Proposition~\ref{prop:entrance} 
\begin{proof}[Proof of Proposition~\ref{prop:entrance}]
 Since $\Lambda$ contains a return path with an entrance, 
Corollary~\ref{infinitepathwithentrance} gives  $x\in \Lambda^{\leq \infty}$ such that for 
all $m\leq d(x)$ there exists $n\geq m$ and $p$ such that $x(n, n+p)$ is a return path with an 
entrance.  Let $v\in \Lambda^0$. Since $\Lambda$ is cofinal there exists  $m\leq d(x)$ such 
that $v\Lambda x(m)$ is nonempty.  Pick $\omega_v\in v \Lambda x(m)$.  
Now pick $n\geq m$ and $p$ such that $x(n, n+p)$ is a return path with an entrance.  
Then $\mu_v=\omega_v x(m,n)$ connects $v$ to a return path with an entrance as desired.
\end{proof}

\end{document}